\documentclass{amsart}

\usepackage{amsmath,amssymb,amsthm,mathtools}
\usepackage{enumitem}
\usepackage{mathrsfs}
\usepackage{xcolor}
\usepackage{aliascnt}
\definecolor{refblue}{RGB}{0,27,126}
\definecolor{citegreen}{RGB}{0,126,68}
\usepackage[
  colorlinks=true,
  linkcolor=refblue,
  citecolor=citegreen,
  urlcolor=purple,
  filecolor=purple]{hyperref}
\hypersetup{pdftitle={Sharp harmonic-mean inequalities for Neumann and Aharonov--Bohm spectra},pdfauthor={Daguang Chen, Chengxi Yang}}
\usepackage[nameinlink,capitalize,noabbrev]{cleveref}
\usepackage{microtype}
\usepackage[a4paper, left=2.8cm, right=2.8cm, top=2.5cm, bottom=2.5cm]{geometry}

\numberwithin{equation}{section}

\newtheorem{theorem}{Theorem}[section]

\newaliascnt{proposition}{theorem}
\newtheorem{proposition}[proposition]{Proposition}
\aliascntresetthe{proposition}

\newaliascnt{lemma}{theorem}
\newtheorem{lemma}[lemma]{Lemma}
\aliascntresetthe{lemma}

\newaliascnt{corollary}{theorem}
\newtheorem{corollary}[corollary]{Corollary}
\aliascntresetthe{corollary}

\theoremstyle{definition}
\newaliascnt{definition}{theorem}

\aliascntresetthe{definition}

\newaliascnt{remark}{theorem}
\newtheorem{remark}[remark]{Remark}
\aliascntresetthe{remark}

\crefname{theorem}{Theorem}{theorems}
\Crefname{theorem}{Theorem}{Theorems}
\crefname{proposition}{Proposition}{propositions}
\Crefname{proposition}{Proposition}{Propositions}
\crefname{lemma}{Lemma}{lemmas}
\Crefname{lemma}{Lemma}{Lemmas}
\crefname{corollary}{Corollary}{corollaries}
\Crefname{corollary}{Corollary}{Corollaries}
\crefname{definition}{Definition}{definitions}
\Crefname{definition}{Definition}{Definitions}
\crefname{remark}{Remark}{remarks}
\Crefname{remark}{Remark}{Remarks}

\newcommand{\dd}{\,d}
\newcommand{\dv}{\,dv}
\newcommand{\ds}{\,ds}
\newcommand{\ii}{\mathrm i}
\newcommand{\e}{\mathrm e}
\newcommand{\tr}{\operatorname{tr}}
\newcommand{\Span}{\operatorname{span}}
\newcommand{\dist}{\operatorname{dist}}
\newcommand{\Id}{\mathrm I}
\newcommand{\Sph}{\mathbb S^2}
\newcommand{\SphK}{\mathbb S_K^2}
\newcommand{\D}{\mathbb D}
\newcommand{\Mmat}{\mathbf M}
\newcommand{\Kmat}{\mathbf K}
\newcommand{\qform}{\mathfrak q}
\newcommand{\cM}{\mathcal M}
\newcommand{\cF}{\mathcal F}

\title[Sharp harmonic-mean inequalities]
{Sharp harmonic-mean inequalities for Neumann and Aharonov--Bohm spectra}

\author{Daguang Chen}
\email{dgchen@tsinghua.edu.cn}
\author{Chengxi Yang}
\email{ycx24@mails.tsinghua.edu.cn}
\address{Department of Mathematical Sciences, Tsinghua University, Beijing 100084, P. R. China.}

\subjclass[2020]{Primary 35P15; Secondary 49R05, 58J50, 81Q10}
\keywords{Neumann eigenvalues, Aharonov--Bohm potentials, Reciprocal sums, Isoperimetric inequalities, Surfaces}
\thanks{The authors were supported by NSFC grant No. 11831005 and NSFC-FWO W2521103.}
\date{\today}

\begin{document}

\begin{abstract}
We prove sharp two-eigenvalue isoperimetric inequalities for  Neumann and Aharonov--Bohm Neumann spectra on surfaces.  If $\Omega\subset\mathbb S^2$ is smooth, simply connected and proper, then the harmonic mean of $\mu_2(\Omega)$ and $\mu_3(\Omega)$ is bounded above by the first positive Neumann eigenvalue of the equal-area geodesic disk, with equality only for disks.  For simply connected surfaces with Gaussian curvature bounded above, we obtain the magnetic analogue for the first two Aharonov--Bohm eigenvalues.  The proof combines a two-dimensional reciprocal Rayleigh--Ritz principle with Green-level comparison.  A key additional ingredient is a spectral ordering theorem for magnetic spherical caps: for $0<\nu<1/2$, the first two eigenvalues lie in the angular sectors of effective orders $\nu$ and $1-\nu$.  We prove this by the factorization $L_0=T^*T$, $L_1=TT^*$ and an exact Neumann--Dirichlet spectral shift.  We also obtain sharp full-sphere and closed-surface bounds, and an annular inequality in terms of conformal modulus and flux.
\end{abstract}

\maketitle

\section{Introduction}\label{sec:introduction}

Sharp isoperimetric inequalities for Neumann eigenvalues begin with the theorem of
Szeg\H{o} \cite{Szego1954}, who showed that the disk maximizes the first positive
Neumann eigenvalue among simply connected planar domains of fixed area.  His
argument also yields the stronger reciprocal inequality
\[
 \frac1{\mu_2(\Omega)}+\frac1{\mu_3(\Omega)}
 \geq \frac2{\mu_2(D)},
\]
where $D$ is the equal-area disk.  Weinberger \cite{Weinberger1956} extended the
first-eigenvalue result to Euclidean domains in arbitrary dimension.  On surfaces
of constant or bounded curvature, corresponding estimates were obtained by
Bandle \cite{Bandle1972,Bandle1980}, Ashbaugh--Benguria
\cite{AshbaughBenguria1995}, and Langford--Laugesen
\cite{LangfordLaugesen2023}.  Recently, Provenzano--Savo
\cite{ProvenzanoSavo2026} proved that every smooth simply connected proper
domain of the round sphere satisfies the sharp comparison with the equal-area
geodesic disk, without an area restriction.

The purpose of this paper is to recover the two-eigenvalue strength of the
planar Szeg\H{o} theorem in the spherical setting, and to establish a magnetic
counterpart for Aharonov--Bohm operators.  The two problems are governed by the
same variational principle but require different geometric trial spaces.

Let
\[
 0=\mu_1(M)<\mu_2(M)\leq\mu_3(M)\leq\cdots
\]
be the Neumann spectrum of a connected compact Riemannian surface.  For a real
magnetic potential $A$, smooth outside finitely many poles, write
\[
 d^A u=du-\ii uA,\qquad \Delta_A=(d^A)^*d^A,
\]
and denote the magnetic Neumann eigenvalues by
\[
 0\leq\lambda_1(M,A)\leq\lambda_2(M,A)\leq\cdots.
\]
For an Aharonov--Bohm potential the spectrum depends only on the flux modulo
integers and complex conjugation, so a nonintegral flux may be reduced to
\[
 0<\nu\leq\frac12.
\]
We shall use the harmonic means
\[
 H_{2,3}(M)=
 \frac{2}{\mu_2(M)^{-1}+\mu_3(M)^{-1}},
 \qquad
 H_{1,2}(M,A)=
 \frac{2}{\lambda_1(M,A)^{-1}+\lambda_2(M,A)^{-1}}.
\]

\subsection{Main results}

Our first result is the spherical reciprocal analogue of the theorem of
Provenzano--Savo.

\begin{theorem}\label{thm:A}
Let $\Omega\subset\Sph$ be a smooth simply connected proper domain and let
$\Omega^\star\subset\Sph$ be a geodesic disk with
$|\Omega^\star|=|\Omega|$. Then
\begin{equation}\label{eq:main-ordinary-intro}
 H_{2,3}(\Omega)\leq H_{2,3}(\Omega^\star)=\mu_2(\Omega^\star).
\end{equation}
Equality holds if and only if $\Omega$ is a geodesic disk.
\end{theorem}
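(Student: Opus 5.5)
The approach is Szegő's conformal-transplantation argument, in the reciprocal Rayleigh--Ritz form adapted to the sphere. The starting point is the two-dimensional reciprocal principle: if $u_1,u_2\in H^1(\Omega)$ are orthogonal to constants, then
\[
 \frac1{\mu_2(\Omega)}+\frac1{\mu_3(\Omega)}\;\geq\;\tr\bigl(\Kmat^{-1}\Mmat\bigr),
\]
where $\Mmat_{ij}=\int_\Omega u_iu_j$ is the mass matrix and $\Kmat_{ij}=\int_\Omega\langle\nabla u_i,\nabla u_j\rangle$ the stiffness matrix. After a linear change of basis one may take $\Kmat$ diagonal, so the right side is a sum of mass-to-energy quotients. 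Consequently it suffices to find two trial functions whose quotients sum to at least $2/\mu_2(\Omega^\star)$.

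For the construction, let $\Omega^\star$ be the cap of geodesic radius $\Theta$. Its eigenvalue $\mu_2(\Omega^\star)$ is double, with eigenfunctions $g(r)\cos\theta$ and $g(r)\sin\theta$.
\begin{enumerate}[label=(\roman*)]
\item By the Riemann mapping theorem, take a conformal map $F\colon\D\to\Omega$, and let $\phi\colon\D\to\Omega^\star$ be the standard conformal map onto the cap.
\item Define trial functions $u_j=v_j\circ\phi\circ F^{-1}$, where $v_1,v_2$ are the cap eigenfunctions.
\item Dirichlet energy is conformally invariant in two dimensions, so $\Kmat$ is unchanged: $\Kmat=\mu_2(\Omega^\star)\,\Mmat^\star$, with $\Mmat^\star$ the mass matrix on the cap.
\item The orthogonality to constants is not automatic. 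Compose with a Möbius automorphism of $\D$ and use a Hersch-type center-of-mass (topological degree) argument to arrange $\int_\Omega u_j=0$.
\end{enumerate}

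It then remains to show $\tr(\Kmat^{-1}\Mmat)\geq 2/\mu_2(\Omega^\star)$, which reduces to
\[
 \int_\Omega (u_1^2+u_2^2)\;\geq\;\int_{\Omega^\star}(v_1^2+v_2^2)=2\pi\int_0^\Theta g^2\sin r\,dr .
\]
The mixed terms drop out because the rotational symmetry makes $\Mmat^\star$ a scalar matrix. Writing the conformal factors as $|F'|^2\rho_\Omega$ and $|\phi'|^2\rho_\star$ in the disk, this becomes a weighted comparison between the area densities of $\Omega$ and of $\Omega^\star$ against the radial weight $G(|z|)=g(r(|z|))^2$.

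The main obstacle is exactly this mass comparison. This is where the ``Green-level comparison'' enters. Let $a(s)$ and $a^\star(s)$ be the areas of the images of $\{|z|<s\}$ in $\Omega$ and in $\Omega^\star$. I would establish $a(s)\le a^\star(s)$ for all $s$, with $a(1)=a^\star(1)$. To do this:
\begin{enumerate}[label=(\roman*)]
\item Bound the circle-averaged conformal factor $\frac1{2\pi}\int|F'|^2\rho(F)\,d\theta$ using subharmonicity. On $\Sph$, the function $\log$ of the conformal factor satisfies $\Delta w=-2e^{w}$, since the curvature is $1$.
\item Combine this with the spherical isoperimetric inequality on the sublevel images, in the manner of Bandle's comparison of conformal radii.
\end{enumerate}
This yields the majorization $a\le a^\star$ with equal totals. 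Integrating by parts against the radial weight then gives
\[
 \int G\,da\;\ge\;\int G\,da^\star
\]
provided $G$ is nondecreasing in $|z|$, i.e. $g$ is increasing on $[0,\Theta]$. This monotonicity is the delicate point on the sphere: for large caps $g$ need not stay monotone. I would try first to replace $g$ by its running maximum (equivalently, extend it as a constant past its maximum), as in Weinberger and Ashbaugh--Benguria, and check that the reciprocal sum still closes.

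For the equality case, equality forces $a=a^\star$. Then the conformal-factor inequality is an equality, so $\log$ of the conformal factor solves the Liouville equation with equality in the isoperimetric step. This identifies $F$ with $\phi$ up to a rigid motion, so $\Omega$ is a geodesic disk.
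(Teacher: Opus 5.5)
There is a genuine gap, and it sits exactly at the point you call delicate. Your reduction is sound. For the transplanted pair, $\Kmat$ is scalar, so you need $\int_\Omega(u_1^2+u_2^2)\geq\int_{\Omega^\star}(v_1^2+v_2^2)$. The majorization $a\leq a^\star$ with equal totals is also correct; it is equivalent to the Green-level comparison $G_p\geq G^\star$ of \cref{lem:spherical-level-comparison}. The problem is the step from $a\leq a^\star$ to the mass inequality. Integration by parts there requires the model profile $g$ to be nondecreasing on $[0,R]$, where $R$ is the cap radius (your $\Theta$), and this fails for large caps. Indeed, $(\sin r\,g')'=\frac{1-\mu_2(\Omega^\star)\sin^2r}{\sin r}\,g$ with $g>0$ and $g'(R)=0$. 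Hence $g'<0$ just inside $r=R$ whenever $\mu_2(\Omega^\star)\sin^2R<1$. This happens for every cap close enough to the whole sphere, since there $\mu_2(\Omega^\star)$ stays bounded (it tends to $2$) while $\sin R\to0$. As written, your argument therefore only covers the range where $g$ is monotone, for example caps no larger than a hemisphere. Since the theorem has no area restriction, this is the whole difficulty, not a technicality.

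The running-maximum repair cannot close the gap. The modified profile $\tilde g$ differs from $g$ on an interval $(r_m,R)$ \emph{inside} the model cap. There it is a nonzero constant, so it does not solve the order-one radial equation. Because the first radial eigenvalue is simple, the Rayleigh quotient of $\tilde g$ on $\Omega^\star$ strictly exceeds $\mu_2(\Omega^\star)$. Your trace bound is then strictly below $2/\mu_2(\Omega^\star)$ even for $\Omega=\Omega^\star$. This is unlike Weinberger's extension, which changes the profile only outside the model domain.

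The missing idea is to stop transplanting a fixed model profile. The paper, following Provenzano--Savo, proceeds as follows.
For each $p\in\Omega$ it takes the positive minimizer $u_p$ of the one-dimensional Green-level problem of $\Omega$ \emph{itself}, with coefficient $G_p$ and unit flux. The trial pair is the real and imaginary parts of $u_{\bar p}F_{\bar p}^{-1}$. Conformal invariance gives $\Kmat=\frac{\kappa}{2}\Id_2$ with $\kappa=\kappa_1(G_{\bar p};1)$, and $\tr\Mmat=1$. Hence $\tr(\Kmat^{-1}\Mmat)=2/\kappa$, and no weighted mass comparison is needed.

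What remains is a comparison of infima, $\kappa_1(G_p;1)\leq\kappa_1(G^\star;1)=\mu_2(\Omega^\star)$. Your mass inequality would imply it, because your transplanted profile is an admissible Green-level competitor on $\Omega$; but this comparison is strictly weaker. It follows from $G_p\geq G^\star$ together with the monotonicity of $\kappa_1(G;\nu)$ in the coefficient (\cref{lem:monotonicity}). That monotonicity holds for every ordered pair of coefficients and does not require the model eigenfunction to be monotone.

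Centering is then done for the $p$-dependent family $u_p$ (\cref{prop:PS-inputs}\textup{(ii)}), rather than by M\"obius motions of a fixed profile. Rigidity comes from the strict form of the coefficient monotonicity.
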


Since $\mu_2\leq\mu_3$, \cref{thm:A} contains the first-eigenvalue comparison of
\cite[Theorem~1.1]{ProvenzanoSavo2026}.  The simple-connectivity hypothesis is
essential: the spherical annuli constructed in
\cite[Theorem~1.2]{ProvenzanoSavo2026} already violate the first-eigenvalue
comparison and hence also \eqref{eq:main-ordinary-intro}.  A curvature-bounded
extension is recorded in \cref{cor:ordinary-curvature}.

We next turn to the magnetic problem.  On a simply connected surface with
boundary, $A_p^{(\nu)}$ denotes a one-pole Aharonov--Bohm potential of flux
$\nu$ at $p$.  On a closed simply connected surface,
$A_{z_1,z_2}^{(\nu)}$ denotes the corresponding two-pole potential, with
opposite fluxes at the two poles.  For $K>0$, let $\SphK$ be the round sphere of
Gaussian curvature $K$, let $\cM_K$ be the class of compact simply connected
surfaces with nonempty smooth boundary and Gaussian curvature at most $K$, and
let $\overline{\cM}_K$ be the analogous closed class.

Michetti--Provenzano--Savo \cite{MichettiProvenzanoSavo2026} proved sharp
first-eigenvalue comparisons for these operators by reducing the problem to
one-dimensional Green-level quotients.  Our main magnetic theorem upgrades that
comparison to the first two eigenvalues.

\begin{theorem}\label{thm:B}
Let $K>0$, let $\Omega\in\cM_K$ satisfy $|\Omega|<4\pi/K$, and fix
$p\in\Omega$ and $0<\nu\leq1/2$.  Let
$\Omega^\star\subset\SphK$ be the geodesic disk of area $|\Omega|$ centered at
$p^\star$. Then
\begin{equation}\label{eq:cap-literal-intro}
 H_{1,2}(\Omega,A_p^{(\nu)})
 \leq
 H_{1,2}(\Omega^\star,A_{p^\star}^{(\nu)}).
\end{equation}
Equality holds if and only if $(\Omega,p)$ is isometric to
$(\Omega^\star,p^\star)$.
\end{theorem}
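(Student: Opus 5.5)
We plan to combine a reciprocal Rayleigh--Ritz principle with transplantation of the extremal eigenfunctions of the magnetic cap through Green-level coordinates. The two-dimensional reciprocal principle reads as follows: if $V=\Span\{f_1,f_2\}$ is any two-dimensional space of admissible test functions for $\Delta_A$ on $\Omega$, $\Mmat$ is its Gram matrix in $L^2$ and $\Kmat$ its energy matrix $\int\langle d^Af_i,d^Af_j\rangle$, then
\[
 \frac1{\lambda_1(\Omega,A)}+\frac1{\lambda_2(\Omega,A)}\ \geq\ \tr(\Kmat^{-1}\Mmat).
\]
So we need two test functions for which $\tr(\Kmat^{-1}\Mmat)$ is at least the corresponding reciprocal sum on $\Omega^\star$. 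The gauge freedom lets us take $A=A_p^{(\nu)}$ in a normal form: near $p$ it equals $\nu\,d\theta$ in the Green-angular coordinate. We use the Dirichlet Green function $G$ of $\Omega$ with pole at $p$ to define a level coordinate $r$ (matching the areas of superlevel sets with geodesic disks in $\SphK$ centered at $p^\star$). The conjugate harmonic function supplies the angle $\theta$, and the map $z=e^{-2\pi G+\ii\theta}$ is a conformal map of $\Omega$ to the unit disk.

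\emph{Step 1: spectral ordering on the cap.} By the ordering theorem announced in the abstract (proved via $L_0=T^*T$, $L_1=TT^*$ and the Neumann--Dirichlet shift), for $0<\nu<1/2$ the first two eigenvalues of $\Omega^\star$ have eigenfunctions $g_\nu(r)e^{\ii\cdot0\cdot\theta}$ and $g_{1-\nu}(r)e^{-\ii\theta}$ in the effective orders $\nu$ and $1-\nu$. Thus $\lambda_1^\star,\lambda_2^\star$ are the lowest radial eigenvalues of the ODEs with angular term $m^2/\sinh$-type weights, $m\in\{\nu,1-\nu\}$. For $\nu=1/2$ the two sectors coincide, so $\lambda_1^\star=\lambda_2^\star$ and the claim reduces to the first-eigenvalue result of \cite{MichettiProvenzanoSavo2026}.

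\emph{Step 2: transplantation.} On $\Omega$ we set $f_1=g_\nu(\rho(x))$ and $f_2=g_{1-\nu}(\rho(x))e^{-\ii\theta(x)}$ (both in the gauge $\nu\,d\theta$), where $\rho$ is the Green-level radius. These lie in different Fourier sectors in $\theta$. Because $\theta$ is defined by the Green conjugate, the level curves carry the harmonic measure $d\theta/2\pi$ uniformly, so the cross terms vanish in both $\Mmat$ and $\Kmat$. The trace then splits as $\sum_j \Mmat_{jj}/\Kmat_{jj}$. It therefore suffices to prove, for each $m\in\{\nu,1-\nu\}$,
\[
 \frac{\int_\Omega |g_m(\rho)|^2}{\int_\Omega\bigl(|\nabla \rho|^2 g_m'(\rho)^2+ m^2 |\nabla\theta|^2 g_m(\rho)^2\bigr)}\ \geq\ \frac1{\lambda^\star_{(m)}}.
\]
Along level sets, $\int|\nabla\theta|^2=\int|\nabla G|^2$-type quantities are fixed by conformality. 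Meanwhile $\int_{\{\rho=t\}}|\nabla\rho|$ is bounded above by the cap value via the Alexandrov--Bol/Bandle isoperimetric inequality for curvature $\le K$ applied to the level set. The numerator is preserved by the area normalization. This is exactly the one-dimensional Green-level comparison of \cite{MichettiProvenzanoSavo2026}, now applied sector by sector. Monotonicity of $g_m$ (needed to handle signs) follows from the radial ODE.

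\emph{Main obstacle and equality.} The delicate point is Step 1, since without the ordering we would not know that the two lowest cap eigenvalues are exactly the $\nu$ and $1-\nu$ sectors; here we simply invoke that theorem. A second subtlety is that the sector-$(1-\nu)$ eigenfunction must be monotone in $\rho$, with the boundary Neumann condition compatible with the comparison. This we would check from the ODE and the factorization. For equality, equality in the Bol inequality on almost every level forces $K$ curvature and round levels, hence $(\Omega,p)$ is isometric to $(\Omega^\star,p^\star)$.
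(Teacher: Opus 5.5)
Your architecture matches the paper's: reciprocal Rayleigh--Ritz, one trial function in each of the sectors of order $\nu$ and $1-\nu$, Green-level comparison, and cap ordering. However, Step~2 fails as you set it up.

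With the area-matched coordinate, the transplanted profile $f(a)=g_m(\rho(a))$ has the same $L^2$ norm as on the cap. The coarea computation gives its energy as
\[
 \int_0^M\Bigl(G_p f'^2+\frac{4\pi^2m^2}{G_p}f^2\Bigr)\dd a,
\]
and on $\Omega^\star$ the same expression holds with $G^\star$ in place of $G_p$. The isoperimetric input is $G_p\geq G^\star$ (\cref{lem:spherical-level-comparison}). So the angular term decreases, but the radial gradient term \emph{increases}.

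Your claim that $\int_{\{\rho=t\}}|\nabla\rho|\ds$ is bounded above by the cap value is backwards. Area matching fixes $\int_{\{\rho=t\}}|\nabla\rho|^{-1}\ds$, and $L^2\leq\int|\nabla\rho|\ds\cdot\int|\nabla\rho|^{-1}\ds$ turns Bol's inequality into a \emph{lower} bound. This is P\'olya--Szeg\H{o} in reverse: area-preserving transplantation of a radial function from the model into $\Omega$ can only raise its Dirichlet integral.

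Nor is this merely a lossy estimate. Take a long thin planar domain of width $\epsilon$ and fixed area. Every level line satisfies $G_p(a)\geq L(a)^2\geq 4a^2/\epsilon^2$, so $G_p/G^\star\to\infty$ on each interval $(a_0,M)$. The Rayleigh quotient of your transplanted cap eigenfunction therefore blows up.

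The missing idea is that the comparison must be made between one-dimensional \emph{eigenvalues}, using trial functions adapted to $\Omega$. The paper takes $\phi_0=f_0(a)$ and $\phi_1=Ff_1(a)$, where $f_0,f_1$ are the normalized minimizers of the Green-level problem with the coefficient $G_p$ of $\Omega$ itself, for orders $\nu$ and $1-\nu$. Then $\Kmat=\operatorname{diag}(\kappa_1(G_p;\nu),\kappa_1(G_p;1-\nu))$ and $\Mmat_{11}=\Mmat_{22}=1$.

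The inequality $G_p\geq G^\star$ enters only through the Sturm--Liouville monotonicity $\kappa_1(G_p;s)\leq\kappa_1(G^\star;s)$ of \cref{lem:monotonicity}, valid for every $s>0$. After that, \cref{thm:cap-angular-ordering} identifies the right-hand sides with the cap's $\lambda_1,\lambda_2$. Equality is handled by the strict part of that lemma.

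You could instead transplant by Green-function value (conformal transplantation). Then the energy is preserved exactly, but the mass comparison uses that the cap profiles increase in $r$. Your assertion that this monotonicity follows from the radial ODE is false near the full sphere. With $w=\sin r\,v'$ one has $w'=(s^2-\eta\sin^2 r)\,v/\sin r$ and $w(0^+)=w(R)=0$. Hence $w<0$ on an interval ending at $R$ as soon as $R>\pi/2$ and $\sin R<s/\sqrt\eta$, which happens as $R\to\pi$.

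Three smaller points:

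\begin{enumerate}[label=\textup{(\alph*)},leftmargin=2.3em]
\item At $\nu=1/2$ the claim does not reduce to the first-eigenvalue theorem. Since $H_{1,2}\geq\lambda_1$, the bound $\lambda_1(\Omega)\leq\lambda_1(\Omega^\star)$ is weaker than what is claimed. You still need the two-sector pair, with both sectors of order $1/2$.
\item Only the energy cross term vanishes. The mass cross term integrates $F^{-1}$ against $|d\psi_p|^{-1}\ds$ on level lines, not against harmonic measure, so $\Mmat_{12}\neq0$ in general. This is harmless, because a diagonal $\Kmat$ already gives $\tr(\Kmat^{-1}\Mmat)=\sum_j\Mmat_{jj}/\Kmat_{jj}$.
\item In the gauge $\nu\,d\theta$, the order-$(1-\nu)$ sector is $e^{\ii\theta}$. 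The function $e^{-\ii\theta}$ has order $1+\nu$.
\end{enumerate}
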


The passage from a two-sector variational estimate to the literal pair
$\lambda_1,\lambda_2$ in \cref{thm:B} requires a spectral fact that does not
follow from the ground-state comparison.  Let $\Omega_R^\star$ be the spherical
cap of radius $R$ and let $\eta_{s,j}^N(R)$ denote the $j$th radial Neumann
eigenvalue of angular order $s$.  We prove the following ordering theorem.

\begin{theorem}[Magnetic cap ordering]\label{thm:cap-angular-ordering}
Let $0<R<\pi$.
\begin{enumerate}[label=\textup{(\roman*)},leftmargin=2.3em]
\item If $0<\nu<1/2$, then
\[
 \lambda_1(\Omega_R^\star,A_{p^\star}^{(\nu)})
 =\eta_{\nu,0}^N(R),
 \qquad
 \lambda_2(\Omega_R^\star,A_{p^\star}^{(\nu)})
 =\eta_{1-\nu,0}^N(R),
\]
and both eigenvalues are simple over $\mathbb C$.
\item If $\nu=1/2$, then
\[
 \lambda_1(\Omega_R^\star,A_{p^\star}^{(1/2)})
 =
 \lambda_2(\Omega_R^\star,A_{p^\star}^{(1/2)})
 =
 \eta_{1/2,0}^N(R),
\]
and the first eigenspace has complex dimension two.
\end{enumerate}
\end{theorem}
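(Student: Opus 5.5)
The plan is to separate variables and reduce everything to one-dimensional Sturm--Liouville problems. In geodesic polar coordinates $(r,\theta)$ about $p^\star$ on $\Omega_R^\star$, with the gauge in which $A_{p^\star}^{(\nu)}=\nu\,d\theta$, the magnetic Laplacian commutes with rotations. Its Neumann spectrum is therefore the union over $k\in\mathbb Z$ of the spectra of the radial operators
\[
 L_s f=-\frac{1}{\sin r}\bigl(\sin r\, f'\bigr)'+\frac{s^2}{\sin^2 r}f,\qquad s=|k-\nu|,
\]
on $(0,R)$, with Neumann condition $f'(R)=0$ and the Friedrichs (finite-energy) condition at $r=0$. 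The eigenvalues of $L_s$ are the numbers $\eta_{s,j}^N(R)$, $j\ge0$, and each radial eigenvalue of order $s$ contributes one complex eigenfunction $e^{\ii k\theta}f(r)$ for each $k$ with $|k-\nu|=s$. For $0<\nu<1/2$ the effective orders are $\nu<1-\nu<1+\nu<2-\nu<\cdots$, each attained by exactly one $k$. For $\nu=1/2$ the order $1/2$ is attained by the two values $k=0,1$.

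\emph{Step 1: ordering in the angular order.} The quadratic form of $L_s$ is strictly increasing in $s^2$. The ground state of $L_s$ is positive, and for $s>0$ its form contains a strictly positive potential term. Hence $s\mapsto\eta_{s,0}^N(R)$ is strictly increasing. It follows that among all ground states $j=0$, the smallest value is $\eta_{\nu,0}^N$, the next is $\eta_{1-\nu,0}^N$, and every other order $s\ge1+\nu$ gives something strictly larger. Each $\eta_{s,j}^N$ is simple as a radial eigenvalue by standard Sturm--Liouville theory. So both claims in (i), together with the simplicity over $\mathbb C$, reduce to a single strict inequality:
\[
 \eta_{\nu,1}^N(R)>\eta_{1-\nu,0}^N(R),
\]
which says that the first radial excitation in the $\nu$-sector lies above the ground state of the $(1-\nu)$-sector.

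\emph{Step 2: the key inequality via factorization.} This is the main obstacle, since monotonicity in $s$ says nothing about comparing different radial levels. I would use the first-order operator $T=\frac{d}{dr}-\nu\cot r$ and its adjoint $T^*$ in $L^2(\sin r\,dr)$. A direct computation should give $T^*T=L_\nu-c_\nu$ and $TT^*=L_{1-\nu}-c_\nu$ for an explicit constant $c_\nu$; this is the statement $L_0=T^*T$, $L_1=TT^*$ of the abstract after the shift. The next task is to track the boundary conditions. If $f$ is a Neumann eigenfunction of $L_\nu$, then $Tf(R)=-\nu\cot R\, f(R)$, so $Tf$ does not satisfy a clean condition. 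I would therefore instead start from Dirichlet eigenfunctions $g$ of $L_{1-\nu}$ (with $g(R)=0$) and check that $T^*g$ satisfies the Neumann condition at $R$ via the eigen-equation. The expected outcome is an exact Neumann--Dirichlet shift: the spectrum of the Neumann problem for $L_\nu$, with its ground level removed, coincides with the Dirichlet spectrum of $L_{1-\nu}$. In particular this would give $\eta_{\nu,1}^N=\eta_{1-\nu,0}^D$. If the boundary terms do not match exactly, I would fall back on an inequality: apply $T$ to the two-dimensional space spanned by the first two Neumann modes of $L_\nu$, and use the fact that $T$ kills no nonzero element of that space other than possibly a multiple of $\sin^\nu r$ (which is not Neumann).

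Granting the identification, the Dirichlet--Neumann comparison for the same operator $L_{1-\nu}$ gives $\eta_{1-\nu,0}^D>\eta_{1-\nu,0}^N$. This inequality is strict because the Neumann ground state does not vanish at $R$. Combined with Step 1, this proves (i). For (ii), the orders $\nu$ and $1-\nu$ coincide at $1/2$. The same argument shows that the two modes $k=0,1$ with radial part the ground state of $L_{1/2}$ lie strictly below every other mode. This gives $\lambda_1=\lambda_2=\eta_{1/2,0}^N$ with a two-dimensional eigenspace. The case $\nu=1/2$ can alternatively be obtained by continuity in $\nu$, which requires only the non-strict version of the inequality.
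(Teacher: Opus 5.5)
Your Step~1 reduction is correct, and it already proves (ii): at $\nu=1/2$ you only need simplicity of the radial eigenvalues of $L_{1/2}$ and monotonicity in $s$. The gap is in Step~2, which carries all of (i).

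\emph{The algebra.} For $T=\frac{d}{dr}+c\cot r$ in $L^2((0,R),\sin r\,dr)$ one has $T^*=-\frac{d}{dr}-(1-c)\cot r$ and
\[
 T^*T=L_{|c|}+c(1-c),\qquad TT^*=L_{|1-c|}+c(1-c).
\]
Your choice $c=-\nu$ gives $TT^*=L_{1+\nu}-\nu(\nu+1)$. So it links $L_\nu$ to $L_{1+\nu}$, not to $L_{1-\nu}$. The choice that does reach $L_{1-\nu}$ is $c=\nu$, but it fails at both endpoints:
\begin{itemize}
\item At the pole, this $T$ sends the finite-energy branch $r^\nu$ of $L_\nu$ to $2\nu r^{\nu-1}$. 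That is the infinite-energy branch of $L_{1-\nu}$.
\item At $r=R$, this $T^*$ sends Dirichlet eigenfunctions of $L_{1-\nu}$ to solutions of $L_\nu f=\eta f$ satisfying the Robin condition $f'(R)+\nu\cot R\,f(R)=0$, not the Neumann condition.
\end{itemize}

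\emph{The expected identity is false.} More decisively, $\eta^N_{\nu,1}(R)=\eta^D_{1-\nu,0}(R)$ does not hold. At $\nu=1/2$ it would read $\eta^N_{1/2,1}=\eta^D_{1/2,0}$. This contradicts the strict interlacing $\eta^N_{1/2,0}<\eta^D_{1/2,0}<\eta^N_{1/2,1}$ for the single operator $L_{1/2}$.

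\emph{The fallback.} You do not say which form domain the image of $f_0,f_1$ under $T$ lies in, nor which min--max inequality you would conclude. It cannot be a two-dimensional trial space for the Neumann problem of $L_{1-\nu}$ with Rayleigh quotients at most $\eta^N_{\nu,1}$. Monotonicity gives $\eta^N_{1-\nu,1}>\eta^N_{\nu,1}$ for $\nu<1/2$, which rules this out. Your parenthetical is also wrong at $R=\pi/2$: there $\sin^\nu r$ is the Neumann ground state of $L_\nu$, so your $T$ kills $f_0$.

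\emph{The missing idea.} Do not intertwine noninteger orders at all. Use monotonicity in the angular parameter to push both sectors to integer orders, where the unshifted factorization $T=\frac{d}{dr}$, $L_0=T^*T$, $L_1=TT^*$ is exact. There:
\begin{itemize}
\item the kernel of $T$ consists of the constants, which are exactly the removed Neumann ground state of $L_0$;
\item the condition $f'(R)=0$ becomes $g(R)=0$;
\item regularity at $r=0$ is preserved.
\end{itemize}
This gives $\eta^N_{0,j+1}=\eta^D_{1,j}$, and then
\[
 \eta^N_{\nu,1}\geq\eta^N_{0,1}=\eta^D_{1,0}>\eta^N_{1,0}>\eta^N_{1-\nu,0}.
\]
The first inequality is min--max: the $s=\nu$ form domain lies inside the $s=0$ one, and the quadratic form of $L_\nu$ dominates that of $L_0$ there. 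The strict inequality after the equality is Dirichlet--Neumann separation. The last is your Step~1 monotonicity, using $1-\nu<1$. This is the paper's argument.

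Combined with $\eta^D_{1-\nu,0}<\eta^D_{1,0}$, the chain also shows $\eta^D_{1-\nu,0}<\eta^N_{\nu,1}$ strictly for every $\nu\in(0,1/2]$. So the inequality you need is true, but the exact shift you hoped for never holds.
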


The point is to exclude the first radial excitation in the $n=0$ sector from
crossing the $n=1$ ground state.  If $L_s$ denotes the radial operator of angular
order $s$, then in $L^2((0,R),\sin r\,dr)$
\[
 L_0=T^*T,\qquad L_1=TT^*,
 \qquad
 T=\frac d{dr},\quad T^*=-\frac d{dr}-\cot r.
\]
Hence the positive radial Neumann spectrum of $L_0$ agrees with the radial
Dirichlet spectrum of $L_1$:
\[
 \eta_{0,j+1}^N(R)=\eta_{1,j}^D(R).
\]
Together with strict Dirichlet--Neumann separation and monotonicity in the
angular parameter, this gives
\[
 \eta_{\nu,1}^N(R)
 \geq\eta_{0,1}^N(R)
 =\eta_{1,0}^D(R)
 >\eta_{1,0}^N(R)
 >\eta_{1-\nu,0}^N(R),
\]
which is the required no-crossing inequality.

At and above the full-sphere threshold, the same two-sector principle yields a
universal model bound.

\begin{theorem}\label{thm:C}
Let $K>0$ and $0<\nu\leq1/2$.
\begin{enumerate}[label=\textup{(\roman*)},leftmargin=2.3em]
\item If $\Omega\in\cM_K$, $|\Omega|\geq4\pi/K$, and $p\in\Omega$, then
\begin{equation}\label{eq:large-intro}
 H_{1,2}(\Omega,A_p^{(\nu)})
 \leq
 \frac{K\nu(1-\nu)(\nu+1)(2-\nu)}{\nu^2-\nu+1}.
\end{equation}
The constant is sharp.
\item If $\Sigma\in\overline{\cM}_K$ and $z_1,z_2\in\Sigma$ are distinct, then
\begin{equation}\label{eq:closed-intro}
 H_{1,2}(\Sigma,A_{z_1,z_2}^{(\nu)})
 \leq
 \frac{K\nu(1-\nu)(\nu+1)(2-\nu)}{\nu^2-\nu+1}.
\end{equation}
Equality holds if and only if $\Sigma$ is isometric to $\SphK$ and
$z_1,z_2$ are antipodal.
\end{enumerate}
\end{theorem}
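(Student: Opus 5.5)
The plan is to apply the two-sector reciprocal Rayleigh--Ritz principle already used for \cref{thm:B}, now with the full round sphere $\SphK$ carrying antipodal poles as the comparison model. First we compute the model. On $\SphK$ with poles at $z_1,z_2$ antipodal and fluxes $\pm\nu$, separate variables in geodesic polar coordinates $(r,\theta)$ about $z_1$. The lowest eigenfunction in angular sector of effective order $s$ is $(\sin\sqrt K r)^{s}e^{\pm \ii s\theta}$, with eigenvalue $Ks(s+1)$. For $0<\nu\le 1/2$ the two lowest sectors are $s=\nu$ and $s=1-\nu$; the same Neumann--Dirichlet shift argument as in \cref{thm:cap-angular-ordering}, applied in the limit $R\to\pi/\sqrt K$, excludes crossings. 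This gives $\lambda_1=K\nu(\nu+1)$ and $\lambda_2=K(1-\nu)(2-\nu)$. Their harmonic mean is
\[
\frac{2K^2\nu(1+\nu)(1-\nu)(2-\nu)}{K\,(2\nu^2-2\nu+2)}
=\frac{K\nu(1-\nu)(\nu+1)(2-\nu)}{\nu^2-\nu+1},
\]
which is exactly the right-hand side of \eqref{eq:large-intro} and \eqref{eq:closed-intro}.

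For the upper bound, we use the reciprocal principle in the form: for any two trial functions $u_1,u_2$ spanning a two-dimensional space, with mass matrix $\Mmat$ and stiffness matrix $\Kmat$ relative to $\qform_A$, we have $\lambda_1^{-1}+\lambda_2^{-1}\ge \tr(\Mmat\Kmat^{-1})$. If the trial functions are $\qform$-orthogonal and $L^2$-orthogonal, this reduces to the sum of the two individual Rayleigh reciprocals. We build the trial functions by transplantation along Green levels.
\begin{enumerate}[label=\textup{(\alph*)},leftmargin=2.3em]
\item In case (i), uniformize $\Omega$ conformally onto the disk so that $p$ goes to $0$. In case (ii), uniformize $\Sigma$ onto the sphere so that $z_1,z_2$ go to $0,\infty$. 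In both cases take the Green function $G$ with pole $p$ (respectively the two-pole Green function), and its harmonic conjugate angle $\theta$.
\item Set $u_1=f_\nu(G)e^{\ii\nu\theta}$ and $u_2=f_{1-\nu}(G)e^{-\ii(1-\nu)\theta}$, where $f_s$ is the radial profile of the model eigenfunction expressed in terms of the model Green level. The phases differ, so the two functions are orthogonal both in $L^2$ and in the magnetic form, and the matrix is diagonal.
\item The Dirichlet integrals are conformally invariant and depend only on the level parametrization, so they coincide with the model. The $L^2$ masses are controlled by the distribution function $t\mapsto|\{G>t\}|$. Bol's isoperimetric inequality (curvature $\le K$), combined with the coarea formula, gives a differential inequality comparing this distribution with that of the model. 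In case (i), the condition $|\Omega|\ge4\pi/K$ means the model sublevel areas are saturated by the whole sphere. The masses on $\Omega$ are therefore at least those of the full-sphere model, because $f_s$ is monotone in the level.
\end{enumerate}
Each Rayleigh quotient is then at most $Ks(s+1)$, and the harmonic-mean bound follows.

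For sharpness in (i), take $\Omega_\varepsilon=\SphK$ minus a small geodesic disk around the antipode of $p$, slightly enlarged in area so that $|\Omega_\varepsilon|\ge 4\pi/K$. The Neumann Aharonov--Bohm eigenvalues of $\Omega_\varepsilon$ converge to those of $\SphK$ with antipodal poles, since a small Neumann hole of capacity zero does not affect the limit spectrum. For equality in (ii), trace back the inequalities. Equality forces equality in Bol's inequality on every level set, hence constant curvature $K$ and geodesic circles as level sets. Together with total area $4\pi/K$, this means $\Sigma=\SphK$ with $G$ radial about $z_1$, so $z_2$ is antipodal.

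The main obstacle is the mass comparison in case (i) when $|\Omega|$ exceeds $4\pi/K$. There the model distribution function is only defined up to the full sphere, and one must show that the surplus area only increases the masses without altering the Dirichlet energies. I would handle this by extending $f_s$ constantly (at its value at the antipode) on the region $\{G<t_{\min}\}$ that has no model counterpart. This adds zero energy and positive mass, and the mass matrix stays diagonal because the phases still differ.
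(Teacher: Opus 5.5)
Your overall architecture (the two-sector reciprocal principle with a diagonal energy matrix, and the model computed on the antipodal sphere) matches the paper's. Step (c), the mass comparison, has a genuine gap. Your justification, ``$f_s$ is monotone in the level'', is false. With $K=1$, the model profile $\sin^s r$, written in the Green level $t$ of the antipodal sphere, is $g(t)=\cosh\bigl(2\pi(t-c)\bigr)^{-s}$. It vanishes at both poles and peaks at the equator level $c$.

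Write $\alpha(t)=|\{\psi>t\}|$. The mass is $\int g^2(-\alpha')\,dt=\int\alpha\,(g^2)'\,dt$, up to a boundary term at $t=0$ in case (i). Bol's inequality gives only $-\alpha'=G(\alpha)\ge\alpha(4\pi-\alpha)$ where $\alpha<4\pi$. This compares $\alpha$ with the logistic model $\alpha^\star$ only after one decides where the two agree, i.e.\ after fixing a translation of the model in $t$, and you never fix it. Moreover, even a one-sided bound $\alpha\ge\alpha^\star$ would control $\int\alpha\,(g^2)'\,dt$ only if $(g^2)'\ge0$, which fails for $t>c$.

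Your treatment of the ``main obstacle'' in (i) does not help either. The model profile vanishes at the antipode, so extending by that value adds no mass. Also, in the level picture the model already occupies every $t\in\mathbb R$; it is $\Omega$ (where $\psi_p>0$) that is missing the model levels $t\le0$, not the other way round. You are mixing the area picture and the level picture.

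The missing idea is a crossing argument. The transplanted energy $\int(g'^2+4\pi^2s^2g^2)\,dt$ is translation invariant, so you may put the peak of $g$ at the level $c$ where $\alpha(c)=2\pi$. Then $\sigma=\log\bigl(\alpha/(4\pi-\alpha)\bigr)$ satisfies $\sigma'\le-4\pi=\sigma^{\star\prime}$ wherever $\alpha<4\pi$, and $\sigma(c)=\sigma^\star(c)$. Hence $\alpha\ge\alpha^\star$ for $t<c$ (trivially where $\alpha\ge4\pi$) and $\alpha\le\alpha^\star$ for $t>c$. Since $(g^2)'$ changes sign exactly at $c$, this gives
\[
\int\alpha\,(g^2)'\,dt\ \ge\ \int\alpha^\star\,(g^2)'\,dt .
\]
In (i), restricting $g$ to $t>0$ only lowers the energy. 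The boundary term pays for the missing levels, since $|\Omega|\,g(0)^2\ge4\pi g(0)^2\ge\int_{-\infty}^0\alpha^\star(g^2)'\,dt$.

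Repaired this way, your route is a genuinely different and more elementary proof of \cref{lem:full-sphere-sector}. The paper instead uses the $\Omega$-minimizers in the area coordinate, \cref{lem:monotonicity} against the truncated coefficients $G_\varepsilon^\star$, and the Dirichlet exhaustion $\eta_{s,0}^D(R_\varepsilon)\to s(s+1)$. In your version, equality in (ii) forces $\alpha\equiv\alpha^\star$, hence $|\Sigma|=4\pi$, and Gauss--Bonnet finishes as in the paper.

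Two smaller points. First, your trial functions are not $L^2$-orthogonal in general: on level lines the mass cross term carries the weight $|d\psi|^{-1}$, not $|d\psi|$. This is harmless, because a diagonal $\Kmat$ already gives $\tr(\Kmat^{-1}\Mmat)=\Mmat_{11}/\Kmat_{11}+\Mmat_{22}/\Kmat_{22}$. Second, sharpness in (i) needs the lower bounds $\liminf\lambda_k\ge$ model values, and ``a small Neumann hole'' does not supply these by itself. The paper obtains them from \cref{thm:cap-angular-ordering} and the compactness argument in \cref{lem:cap-exhaustion}.
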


\begin{remark}\label{rem:punctured-model}
In \cref{thm:C}\textup{(i)} equality is not attained in the class with nonempty
smooth boundary.  The sharp constant is approached by centered caps exhausting
the sphere; the limiting finite-energy model is the antipodal two-pole sphere.
\end{remark}

Finally, for a Riemannian annulus $C$, let $M>0$ denote its conformal modulus,
so that $C$ is conformally equivalent to
$C_M=\mathbb S^1\times[-M,M]$ with $\mathbb S^1$ of length $2\pi$.

\begin{theorem}\label{thm:D}
Let $C$ be a Riemannian annulus of conformal modulus $M>0$, and let
$0<\nu\leq1/2$. Then
\begin{equation}\label{eq:annulus-two-mode-intro}
 H_{1,2}(C,A^{(\nu)})
 \leq
 \frac{8\pi M}{|C|}
 \frac{\nu^2(1-\nu)^2}{\nu^2+(1-\nu)^2}.
\end{equation}
Let $\widehat C_M$ be the homothetic copy of $C_M$ with area $|C|$.  If
\begin{equation}\label{eq:cylinder-threshold-intro}
 1-2\nu\leq\frac{\pi^2}{4M^2},
\end{equation}
then
\begin{equation}\label{eq:annulus-model-intro}
 H_{1,2}(C,A^{(\nu)})
 \leq H_{1,2}(\widehat C_M,A^{(\nu)}),
\end{equation}
with equality if and only if $C$ is isometric to $\widehat C_M$.
\end{theorem}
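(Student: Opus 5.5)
The plan is to apply the two-dimensional reciprocal Rayleigh--Ritz principle to a trial space built from the conformal coordinates of $C_M$. By conformal invariance of the Dirichlet energy, all energies become computable exactly, while only the masses feel the metric of $C$.

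\textbf{Setup and trial space.} Fix a conformal diffeomorphism $C\to C_M$ with coordinates $(\theta,t)\in\mathbb S^1\times[-M,M]$. Write the metric as $g=\rho\,(d\theta^2+dt^2)$. Since the spectrum depends only on the flux modulo gauge, we may take $A=\nu\,d\theta$. Take the trial space $V=\Span\{u_1,u_2\}$ with $u_1=1$ and $u_2=\e^{\ii\theta}$. These lie in $H^1(C)$ and are linearly independent, and no boundary condition is needed in the Neumann form. We have $d^Au_1=-\ii\nu\,d\theta$ and $d^Au_2=\ii(1-\nu)\e^{\ii\theta}d\theta$. In dimension two $|d\theta|_g^2\,dv_g=d\theta\,dt$, so the energy Gram matrix is
\[
\Kmat=4\pi M\begin{pmatrix}\nu^2&0\\0&(1-\nu)^2\end{pmatrix}.
\]
The off-diagonal entry vanishes because it is proportional to $\int_{C_M}\e^{\ii\theta}\,d\theta\,dt=0$. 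The mass matrix is
\[
\Mmat=\begin{pmatrix}|C|&m\\\bar m&|C|\end{pmatrix},\qquad m=\int_C\e^{\ii\theta}\,dv,
\]
which is unknown but has diagonal equal to $|C|$.

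\textbf{The bound \eqref{eq:annulus-two-mode-intro}.} The reciprocal principle gives
\[
\lambda_1^{-1}+\lambda_2^{-1}\ge\tr(\Kmat^{-1}\Mmat),
\]
valid since $\Kmat>0$. Because $\Kmat$ is diagonal, the trace only sees the diagonal of $\Mmat$, so the unknown $m$ drops out:
\[
\tr(\Kmat^{-1}\Mmat)=\frac{|C|}{4\pi M}\Bigl(\frac1{\nu^2}+\frac1{(1-\nu)^2}\Bigr).
\]
Inverting and doubling gives \eqref{eq:annulus-two-mode-intro}.

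\textbf{The model cylinder and \eqref{eq:annulus-model-intro}.} On $\widehat C_M$, which is $C_M$ scaled by $c^2=|C|/(4\pi M)$, separation of variables gives the eigenfunctions
\[
\e^{\ii n\theta}\cos\bigl(k\pi(t+M)/(2M)\bigr),\qquad n\in\mathbb Z,\ k\ge0,
\]
with eigenvalues $c^{-2}\bigl((n-\nu)^2+k^2\pi^2/(4M^2)\bigr)$. Since $0<\nu\le1/2$, the smallest is $n=0,k=0$, giving $c^{-2}\nu^2$. The next candidates are $n=1,k=0$, giving $c^{-2}(1-\nu)^2$, and $n=0,k=1$, giving $c^{-2}\bigl(\nu^2+\pi^2/(4M^2)\bigr)$. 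All others, such as $n=-1$ with value $(1+\nu)^2$, are larger. The condition $(1-\nu)^2\le\nu^2+\pi^2/(4M^2)$ is exactly \eqref{eq:cylinder-threshold-intro}. Under it, $\lambda_2(\widehat C_M)=c^{-2}(1-\nu)^2$, so $H_{1,2}(\widehat C_M,A^{(\nu)})$ equals the right side of \eqref{eq:annulus-two-mode-intro}. Hence \eqref{eq:annulus-model-intro} follows from the first bound.

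\textbf{Equality case (expected main obstacle).} The hard step is the rigidity statement. I would use the equality clause of the reciprocal principle: equality forces $V$ to be the span of eigenfunctions for $\lambda_1,\lambda_2$. I would first make sure this clause holds in the form needed, and that the gauge choice $A=\nu\,d\theta$ is legitimate on the Riemannian annulus. If the clause holds, $\Delta_A V\subset V$. Apply this to $u_1=1$: with $(d^A)^*(-\ii\nu\,d\theta)=-\ii\nu\,d^*d\theta+\nu^2|d\theta|_g^2$, this yields
\[
\Delta_A 1=\nu^2|d\theta|_g^2=\nu^2\rho^{-1},
\]
using $d^*d\theta=0$ for the conformal metric. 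This must equal $a+b\,\e^{\ii\theta}$. Since the left side is real and positive, $b=0$. So $\rho$ is constant, and $C$ is isometric to $\widehat C_M$. The converse holds by the model computation above.
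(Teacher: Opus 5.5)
Your proposal is correct and follows the paper's proof essentially step for step. It uses the same trial pair $1,\e^{\ii\theta}$ in the gauge $A^{(\nu)}=\nu\,d\theta$, and the same diagonal energy matrix $4\pi M\operatorname{diag}(\nu^2,(1-\nu)^2)$ from conformal invariance, so the off-diagonal mass entry drops out of the trace. It also uses the same separation-of-variables identification of $\lambda_2(\widehat C_M,A^{(\nu)})$ under the threshold, and the same rigidity argument, in which invariance of the span under $\Delta_{A^{(\nu)},g}$ applied to the constant function forces the conformal factor to be constant. The only difference is cosmetic: you write the conformal factor as $\rho$ where the paper writes $\rho^2$.
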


\subsection{Method and relation to previous work}

The proofs are organized around a common two-dimensional variational mechanism.
For two trial functions, a trace form of the Rayleigh--Ritz principle converts
a diagonal energy matrix into a lower bound for the sum of reciprocal
eigenvalues.  In the ordinary problem, the trial pair is obtained from the real
and imaginary parts of a centered integer-flux gauge mode.  In the magnetic
problem, the pair comes from the Green-level radial minimizers with effective
orders $\nu$ and $1-\nu$.  The geometric comparison of
\cite{ProvenzanoSavo2026,MichettiProvenzanoSavo2026} then controls the two
diagonal energies simultaneously.  Thus the genuinely new step is not another
first-eigenvalue comparison, but the construction and identification of a
two-dimensional trial space that is sharp at the model.

Variational principles for reciprocal eigenvalue sums go back to Hersch
\cite{Hersch1961} and Hile--Xu \cite{HileXu1993}.  Recent sharp results for
Neumann reciprocal sums include work of Xia--Wang \cite{WangXia2023},
Benguria--Brandolini--Chiacchio \cite{BenguriaBrandoliniChiacchio2020},
He--Li--Tang \cite{HeLiTang2026}, You--Zhang \cite{YouZhang2026}, and
Eddaoudi \cite{Eddaoudi2026}.  For Aharonov--Bohm eigenvalues we use the
geometric framework developed by Colbois--Provenzano--Savo
\cite{ColboisProvenzanoSavo2022} and Michetti--Provenzano--Savo
\cite{MichettiProvenzanoSavo2026}.  The magnetic annular comparison is related
to the work of Provenzano--Savo \cite{ProvenzanoSavo2025}.

The paper is organized as follows.  \Cref{sec:Magnetic spectral preliminaries and Green-level reductions}
collects the magnetic preliminaries, Green-level reductions, and reciprocal
Rayleigh--Ritz principle.  \Cref{sec:ordinary} proves \cref{thm:A}.
\Cref{sec:magnetic} proves \cref{thm:B,thm:C,thm:D}.  The final section proves
\cref{thm:cap-angular-ordering} and the sharpness statement in
\cref{rem:punctured-model}.

\section{Magnetic spectral preliminaries and Green-level reductions}\label{sec:Magnetic spectral preliminaries and Green-level reductions}
In this section, we collect the magnetic spectral definitions, Green-level reductions, and variational comparisons used in the proofs. The presentation draws mainly on \cite{ProvenzanoSavo2026,MichettiProvenzanoSavo2026} for the magnetic and Green-level framework, and on \cite{Hersch1961,HileXu1993,HeLiTang2026} for the reciprocal variational principle.

\subsection{The magnetic Laplacian and its eigenvalues}
Let $(M,g)$ be a compact oriented Riemannian surface, possibly with boundary, and let $A$ be a smooth real one-form outside a finite set of poles.
With the notation of \cref{sec:introduction}, the magnetic differential, its adjoint, and the magnetic Laplacian are
\begin{equation*}
    \begin{aligned}
        & d^Au=du-\ii uA, \qquad (d^A)^*w=d^*w+\mathrm{i}\langle A,w\rangle_g, \\
        & \Delta_A u=(d^A)^*d^A u = \Delta u+2\mathrm{i}\langle A,du\rangle_g+\left(|A|_g^2-\mathrm{i}d^*A\right)u.
    \end{aligned}
\end{equation*}
If $\partial M\neq\varnothing$, we impose the magnetic Neumann condition
\begin{equation*}
    d^Au(N)=0\qquad\text{on }\partial M,
\end{equation*}
where $N$ is the outward unit normal.
The corresponding quadratic form is
\begin{equation*}
    \qform_A(u,v)=\int_M\langle d^Au,d^Av\rangle_{\mathbb C}\dv,
\end{equation*}
with form domain
\begin{equation}\label{eq:magnetic-form-domain}
    H_A^1(M)=\left\{u\in L^2(M;\mathbb C):d^Au\in L^2(T^*M;\mathbb C)\right\}.
\end{equation}
The differential is understood weakly away from the poles, and \eqref{eq:magnetic-form-domain} specifies the finite-energy realization.
The quadratic form defines a nonnegative self-adjoint operator with compact resolvent, whose eigenvalues satisfy
\begin{equation*}
    0\leq\lambda_1(M,A)\leq\lambda_2(M,A)\leq\cdots\nearrow+\infty.
\end{equation*}
The min--max principle gives
\begin{equation*}
    \lambda_k(M,A)=\min_{\substack{V\subset H_A^1(M)\\ \dim V=k}}\max_{0\neq u\in V}\frac{\int_M|d^Au|^2\dv}{\int_M|u|^2\dv}.
\end{equation*}
When $A=0$, we write $\mu_k(M)$ for the ordinary Neumann eigenvalues and, when $M$ is connected, enumerate them by
\begin{equation*}
    0=\mu_1(M)<\mu_2(M)\leq\mu_3(M)\leq\cdots.
\end{equation*}

\subsection{Aharonov--Bohm potentials and gauge invariance}
Recall that the normalized flux of a closed one-form $A$ around an oriented closed curve $c$ is
\begin{equation*}
    \Phi^A(c)=\frac1{2\pi}\int_cA.
\end{equation*}

Let $\Omega$ be simply connected with nonempty boundary and let $p\in\Omega$.
A one-pole Aharonov--Bohm potential of flux $\nu$ is a smooth closed one-form $A_p^{(\nu)}$ on $\Omega\setminus\{p\}$ such that
\begin{equation*}
    \Phi^{A_p^{(\nu)}}(c)=\nu
\end{equation*}
for every positively oriented simple loop enclosing $p$.
Any two such potentials differ by an exact form and hence define isospectral operators.

Let now $\Sigma$ be a closed simply connected surface and let $z_1,z_2\in\Sigma$ be distinct.
A two-pole Aharonov--Bohm potential of flux $\nu$ is a smooth closed one-form $A_{z_1,z_2}^{(\nu)}$ on $\Sigma\setminus\{z_1,z_2\}$ whose fluxes around $z_1$ and $z_2$ are $\nu$ and $-\nu$, respectively.
For a fixed metric, its spectrum depends only on the flux and the positions of the poles.

The following gauge properties are recorded in \cite[Lemma~2.3]{ProvenzanoSavo2026} and \cite[Theorem~1.1]{MichettiProvenzanoSavo2026}. If $A-A'$ is closed and every normalized period $\frac1{2\pi}\int_c(A-A')$ is an integer, then
\begin{equation*}
    \lambda_k(M,A)=\lambda_k(M,A')\qquad\text{for every }k\geq1.
\end{equation*}
Moreover, $\lambda_1(M,A)=0$ if and only if $A$ is closed and all its normalized periods are integers.

For an Aharonov--Bohm potential of flux $\gamma\in\mathbb R$, gauge invariance and complex conjugation show that its spectrum depends only on
\begin{equation*}
    \nu=\dist(\gamma,\mathbb Z)\in[0,1/2].
\end{equation*}
Thus the nonintegral-flux estimates reduce to $0<\nu\leq1/2$. We also record the integer-flux gauge transformation.
\begin{lemma}\label{lem:gauge}
    Let $A$ be a closed one-form whose normalized periods are integers. Then there exists a smooth $\mathbb{S}^{1}$-valued function $F$ on the punctured surface such that
    \begin{equation*}
        dF=\ii FA, \qquad d^A(Fu)=F\,du, \qquad (d^A)^*(F\omega)=F\,d^*\omega, \qquad \Delta_A=F\circ\Delta\circ F^{-1}.
    \end{equation*}
    Consequently, multiplication by $F$ gives a unitary equivalence between the ordinary and magnetic Neumann problems, and hence $\lambda_k(M,A)=\mu_k(M)$ for every $k\geq1$.

    Conversely, if there exists a nonzero function $u$ satisfying $d^Au=0$, then all normalized periods of $A$ are integers. In particular, if $A$ has a nonintegral normalized period, then $\ker\Delta_A=\{0\}$ and $\lambda_1(M,A)>0$.
\end{lemma}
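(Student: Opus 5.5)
The plan is to construct $F$ by integrating $A$ along paths, as one does when building a primitive. Write $M^\circ$ for the punctured surface; we may assume it is connected, since otherwise we argue componentwise. Fix a base point $x_0\in M^\circ$ and set
\begin{equation*}
 F(x)=\exp\Bigl(\ii\int_{x_0}^{x}A\Bigr),
\end{equation*}
where the integral is taken along any piecewise smooth path in $M^\circ$ from $x_0$ to $x$. Because $A$ is closed, the integral depends only on the homotopy class of the path. Two paths differ by a loop $c$, and this changes the integral by $\int_cA=2\pi\Phi^A(c)\in2\pi\mathbb Z$. The exponential therefore does not see the choice, and $F$ is well defined, smooth and $\mathbb S^1$-valued. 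Differentiating locally, where $A=d\theta$ for a local primitive $\theta$ and $F=\e^{\ii\theta}\cdot\mathrm{const}$, gives $dF=\ii FA$.

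The remaining identities follow by direct computation. For the first, $d^A(Fu)=F\,du+u\,dF-\ii FuA=F\,du$. For the adjoint, the product rule gives $d^*(F\omega)=F\,d^*\omega-\langle dF,\omega\rangle_g$. Since $\overline{F}=F^{-1}$, we have $dF=\ii FA$ with $A$ real. It follows that $(d^A)^*(F\omega)=F\,d^*\omega-\ii F\langle A,\omega\rangle+\ii\langle A,F\omega\rangle=F\,d^*\omega$. Composing the two identities yields $\Delta_A(Fu)=F\Delta u$.

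Next comes unitary equivalence. Since $|F|=1$, multiplication by $F$ is unitary on $L^2$. By the first identity it maps $\{u:du\in L^2\}$ onto $H_A^1$ and satisfies $\qform_A(Fu,Fv)=\int\langle du,dv\rangle$. So it intertwines the two quadratic forms, and hence the associated self-adjoint operators. The Neumann conditions are built into the forms, and pointwise $d^A(Fu)(N)=F\,du(N)$, so these match as well. One point needs care: the ordinary form on the punctured surface must be the Neumann form on $M$. This holds because points have zero capacity in dimension two, so $H^1(M^\circ)=H^1(M)$. The min--max characterization then gives $\lambda_k(M,A)=\mu_k(M)$.

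For the converse, suppose $u\neq0$ and $d^Au=0$. Then $d|u|^2=2\operatorname{Re}(\bar u\,d^Au)=0$, so $|u|$ is a nonzero constant on the connected punctured surface. Hence $F:=u/|u|$ is $\mathbb S^1$-valued and satisfies $dF=\ii FA$. Integrating $F^{-1}dF=\ii A$ around any loop $c$ gives $\ii\int_cA=2\pi\ii\cdot(\text{winding number of }F\circ c)$, so every normalized period is an integer. If some period is nonintegral, no such $u$ exists. Since $\Delta_Au=0$ means $\qform_A(u,u)=0$, that is $d^Au=0$, the kernel is trivial, and compactness of the resolvent gives $\lambda_1>0$.

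The main obstacle I expect is the regularity in the converse, namely justifying that a weak $H_A^1$ solution of $d^Au=0$ is smooth away from the poles, so that $|u|$ being constant makes sense. I would handle this by local elliptic regularity: locally $A=d\theta$, and $e^{-\ii\theta}u$ has zero weak differential, so it is constant.
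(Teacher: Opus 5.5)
Your proposal is correct and follows the same route as the paper. The construction $F(x)=\exp\bigl(i\int_{x_0}^{x}A\bigr)$ is well defined because the periods are integers, the identities follow by direct differentiation, and the converse uses $|u|$ constant together with integer winding numbers of $u/|u|$. Two of your points fill in details the paper leaves implicit: points have zero capacity, so the ordinary form on the punctured surface is the $H^1(M)$ Neumann form; and local regularity shows that weak solutions of $d^Au=0$ are locally of the form $c\,e^{i\theta}$.
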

\begin{proof}
    Fix $x_0$ and define $F(x)=\exp\bigl(\ii\int_{c_x}A\bigr)$, where $c_x$ joins $x_0$ to $x$. The integer-period condition makes $F$ independent of the path, and differentiation gives the stated identities.

    Conversely, suppose that $d^Au=0$ for some nonzero $u$. Then $d|u|^2=0$, so $|u|$ is a positive constant. Thus $F_u=u/|u|$ is an $\mathbb{S}^{1}$-valued function satisfying $dF_u=\ii F_uA$. For every closed curve $c$,
    \begin{equation*}
        \frac1{2\pi}\int_cA=\frac1{2\pi\ii}\int_cF_u^{-1}dF_u=\deg(F_u|_c)\in\mathbb Z.
    \end{equation*}
    Hence all normalized periods of $A$ are integers. Finally, any function in $\ker\Delta_A$ satisfies $\|d^Au\|_2^2=0$, which proves the last assertion.
\end{proof}

\subsection{Green functions and canonical potentials}
Let $\Omega$ be a compact simply connected surface with nonempty smooth boundary and let $p\in\Omega$. The Dirichlet Green function $\psi_p$ is determined by
\begin{equation*}
    \Delta\psi_p=\delta_p\quad\text{in }\Omega, \qquad \psi_p=0\quad\text{on }\partial\Omega.
\end{equation*}
The function $\psi_p$ is positive and harmonic in $\Omega\setminus\{p\}$. Let $\D\subset\mathbb R^2$ be the unit disk and let $(r,\theta)$ be the usual polar coordinates centered at the origin. If $\Phi:\D\to\Omega$ is a conformal diffeomorphism with $\Phi(0)=p$, then
\begin{equation*}
    \Phi^*\psi_p=-\frac1{2\pi}\log r.
\end{equation*}
Consequently, $\psi_p$ has no critical points in $\Omega\setminus\{p\}$, and its level lines are smooth simple closed curves.

The canonical unit-flux potential and its flux-$\nu$ multiple are
\begin{equation}\label{eq:unit-flux-potential}
    A_p=-2\pi*d\psi_p, \qquad A_p^{(\nu)}=\nu A_p=-2\pi\nu*d\psi_p,
\end{equation}
where $*$ is the Hodge-star operator associated with $g$. The form $A_p$ is smooth, closed, and co-closed away from $p$, has flux one around $p$, and satisfies $\Phi^*A_p=d\theta$.

Let now $\Sigma$ be a closed simply connected surface and let $z_1,z_2\in\Sigma$ be distinct. We denote by $\psi_{z_1,z_2}$ the two-pole Green function normalized by
\begin{equation*}
    \Delta\psi_{z_1,z_2}=\delta_{z_1}-\delta_{z_2},
    \qquad
    \int_\Sigma\psi_{z_1,z_2}\dv=0.
\end{equation*}
The associated canonical potential is
\begin{equation*}
    A_{z_1,z_2} = -2\pi*d\psi_{z_1,z_2}, \qquad A_{z_1,z_2}^{(\nu)} = \nu A_{z_1,z_2} = -2\pi\nu*d\psi_{z_1,z_2}.
\end{equation*}
The flux-$\nu$ potential has fluxes $\nu$ and $-\nu$ around $z_1$ and $z_2$, respectively. After choosing a conformal parametrization $\Phi:\overline{\mathbb C}\to\Sigma$ with $\Phi(0)=z_1$ and $\Phi(\infty)=z_2$, one has $\Phi^*\psi_{z_1,z_2}=-\frac{1}{2\pi}\log r+C$ and $\Phi^*A_{z_1,z_2}^{(\nu)}=\nu\,d\theta$. Thus $\psi_{z_1,z_2}$ has no critical points away from its poles, assumes all real values, and has smooth simple closed level lines.

\subsection{Green-level coordinates and radial spectra}
We first consider the one-pole case and set $M=|\Omega|$. For $t>0$, let
\begin{equation*}
    \alpha_p(t)=|\{\psi_p>t\}|.
\end{equation*}
The coarea formula gives $\alpha_p'(t)=-\int_{\{\psi_p=t\}}|d\psi_p|^{-1}\ds$.
Since $\psi_p$ has no critical points away from $p$, the function $\alpha_p:(0,\infty)\to(0,M)$ is smooth and strictly decreasing. Let $\beta_p$ be its inverse and define
\begin{equation*}
    G_p(a)=\int_{\{\psi_p=\beta_p(a)\}}\frac1{|d\psi_p|}\ds, \qquad 0<a<M.
\end{equation*}
For $a=\alpha_p(t)$ and $t=\beta_p(a)$, we have
\begin{equation*}
    \alpha_p'(t)=-G_p(a), \qquad \beta_p'(a)=-\frac{1}{G_p(a)}, \qquad da=-G_p(a)\,dt=-G_p(a)\,d\psi_p.
\end{equation*}
Moreover,
\begin{equation}\label{eq:G-one-asymptotic}
    G_p(a) \sim 4\pi a \quad \text{as } a\to0,
\end{equation}
while $a=M$ is a regular endpoint. We denote the corresponding Green-level class by
\begin{equation*}
    \mathcal R_p(\Omega)=\{g\circ\psi_p:g\in H^1(0,\infty)\}\subset H_{A_p^{(\nu)}}^1(\Omega),
\end{equation*}
and define its first radial eigenvalue by
\begin{equation*}
    \kappa_1(\Omega,A_p^{(\nu)})=\inf_{0\neq u\in\mathcal R_p(\Omega)}\frac{\int_\Omega|d^{A_p^{(\nu)}}u|^2\dv}{\int_\Omega u^2\dv}.
\end{equation*}

The two-pole case is analogous. Set $M=|\Sigma|$ and define
\begin{equation*}
    \alpha_{z_1,z_2}(t)=|\{\psi_{z_1,z_2}>t\}|, \qquad t\in\mathbb R.
\end{equation*}
The function $\alpha_{z_1,z_2}:\mathbb R\to(0,M)$ is smooth and strictly decreasing. Let $\beta_{z_1,z_2}$ be its inverse and define
\begin{equation*}
    G_{z_1,z_2}(a)=\int_{\{\psi_{z_1,z_2}=\beta_{z_1,z_2}(a)\}}\frac1{|d\psi_{z_1,z_2}|}\ds, \qquad 0<a<M.
\end{equation*}
Its endpoint behavior is
\begin{equation}\label{eq:G-two-asymptotic}
    G_{z_1,z_2}(a)\sim4\pi a \quad\text{as }a\to0, \qquad G_{z_1,z_2}(a)\sim4\pi(M-a) \quad\text{as }a\to M.
\end{equation}
We denote the corresponding Green-level class by
\begin{equation*}
    \mathcal R_{z_1,z_2}(\Sigma) =\{g\circ\psi_{z_1,z_2}:g\in H^1(\mathbb R)\} \subset H_{A_{z_1,z_2}^{(\nu)}}^1(\Sigma),
\end{equation*}
and define its first radial eigenvalue by
\begin{equation*}
    w_1(\Sigma,A_{z_1,z_2}^{(\nu)})=\inf_{0\neq u\in\mathcal R_{z_1,z_2}(\Sigma)}\frac{\int_\Sigma|d^{A_{z_1,z_2}^{(\nu)}}u|^2\dv}{\int_\Sigma u^2\dv}.
\end{equation*}

In either case, write $X$ for the surface and $\psi$, $\beta$, $G$, and $A^{(\nu)}$ for the Green function, inverse area function, level coefficient, and canonical potential, respectively. Let $u=g\circ\psi$ be real-valued and set $f=g\circ\beta$. Then
\begin{equation*}
    g(t)=f(a), \qquad g'(t)=-G(a)f'(a).
\end{equation*}
Since $A^{(\nu)}=-2\pi\nu*d\psi$ and $d\psi$ is orthogonal to $*d\psi$,
\begin{equation*}
    |d^{A^{(\nu)}}u|^2=\big(g'(t)^2+4\pi^2\nu^2g(t)^2\big)|d\psi|^2.
\end{equation*}
Using $\int_{\{\psi=t\}}|d\psi|\ds=1$, the coarea formula yields
\begin{equation*}
    \begin{aligned}
        \int_X u^2\dv=\int_0^M f^2\dd a \qquad \text{and} \qquad
        \int_X|d^{A^{(\nu)}}u|^2\dv=\int_0^M\left(Gf'^2+\frac{4\pi^2\nu^2}{G}f^2\right)\dd a.
    \end{aligned}
\end{equation*}

These identities yield the following one-dimensional characterizations; see \cite[Lemma~3.1]{ProvenzanoSavo2026} and \cite[Lemmas~3.1 and~4.1]{MichettiProvenzanoSavo2026}.
\begin{proposition}
    Let $\nu>0$. For a positive level coefficient $G$ on $(0,M)$, set
    \begin{equation*}
        \cF_G=\left\{ f\in L^2(0,M)\cap H^1_{\mathrm{loc}}(0,M): \sqrt Gf'\in L^2(0,M),\ G^{-1/2}f\in L^2(0,M) \right\}.
    \end{equation*}
    \begin{enumerate}[label=\textup{(\roman*)},leftmargin=2.3em]
        \item In the one-pole case,
        \begin{equation*}
            \begin{aligned}
                \kappa_1(\Omega,A_p^{(\nu)})=\kappa_1(G_p;\nu):=\inf_{0\neq f\in\cF_{G_p}}\frac{\int_0^M\left(G_pf'^2+\frac{4\pi^2\nu^2}{G_p}f^2\right)\dd a}{\int_0^M f^2\dd a}.
            \end{aligned}
        \end{equation*}
        Equivalently, it is the first eigenvalue $\kappa=\kappa_1(G_p;\nu)$ of the Sturm--Liouville problem
        \begin{equation*}
            \begin{cases}
                & -(G_pf')'+\frac{4\pi^2\nu^2}{G_p}f=\kappa f, \qquad \text{in }(0,M),\\
                & \displaystyle\lim_{a\to0}G_p(a)f'(a)=0, \qquad f'(M)=0.
            \end{cases}
        \end{equation*}
        In particular, $\lambda_1(\Omega,A_p^{(\nu)})\leq\kappa_1(\Omega,A_p^{(\nu)})$.

        \item In the two-pole case,
        \begin{equation*}
            \begin{aligned}
                w_1(\Sigma,A_{z_1,z_2}^{(\nu)})=w_1(G_{z_1,z_2};\nu):=\inf_{0\neq f\in\cF_{G_{z_1,z_2}}}\frac{\int_0^M\left(G_{z_1,z_2}f'^2+\frac{4\pi^2\nu^2}{G_{z_1,z_2}}f^2\right)\dd a}{\int_0^M f^2\dd a}.
            \end{aligned}
        \end{equation*}
        Equivalently, it is the first eigenvalue $w=w_1(G_{z_1,z_2};\nu)$ of the Sturm--Liouville problem
        \begin{equation*}
            \begin{cases}
                & -(G_{z_1,z_2}f')'+\frac{4\pi^2\nu^2}{G_{z_1,z_2}}f=wf, \qquad \text{in }(0,M),\\
                & \displaystyle\lim_{a\to0}G_{z_1,z_2}(a)f'(a)=\lim_{a\to M}G_{z_1,z_2}(a)f'(a)=0.
            \end{cases}
        \end{equation*}
        In particular, $\lambda_1(\Sigma,A_{z_1,z_2}^{(\nu)})\leq w_1(\Sigma,A_{z_1,z_2}^{(\nu)})$.
    \end{enumerate}

    In both cases, the first eigenvalue is simple and has a positive $L^2(0,M)$-normalized minimizer.
\end{proposition}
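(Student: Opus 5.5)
The proof is a direct reduction to the one-dimensional Sturm--Liouville problems, using the coarea identities just derived. First I establish the quadratic-form identities for the Green-level class. Take $u=g\circ\psi$ with $g$ real and set $f=g\circ\beta$. The pointwise computation $|d^{A^{(\nu)}}u|^2=(g'^2+4\pi^2\nu^2g^2)|d\psi|^2$ and the coarea formula, with $\int_{\{\psi=t\}}|d\psi|\ds=1$ and $da=-G\,d\psi$, give
\[
\int_X u^2\dv=\int_0^Mf^2\dd a,\qquad \int_X|d^{A^{(\nu)}}u|^2\dv=\int_0^M\Big(Gf'^2+\frac{4\pi^2\nu^2}{G}f^2\Big)\dd a .
\]
The map $g\mapsto f$ is then a bijection between real $g\circ\psi\in H^1_{A^{(\nu)}}$ and $\cF_G$. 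The forward direction is immediate from finiteness of both integrals. For the converse, given $f\in\cF_G$, I define $g=f\circ\alpha$ and check that $u=g\circ\psi$ lies in $H^1_{A^{(\nu)}}$ by the same identity.

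The definition of $\mathcal R$ uses $g\in H^1$, so I need to reconcile it with $\cF_G$ by density. I restrict to $f$ that are smooth with compact support in $(0,M]$ (one-pole case) or in $(0,M)$ (two-pole case). Such $f$ correspond to bounded $g$ with compact support in $t$, and these are dense in $\cF_G$ for the weighted norm. Near $a=0$ the weight $G\sim4\pi a$, by \eqref{eq:G-one-asymptotic} and \eqref{eq:G-two-asymptotic}, together with the term $\int f^2/G$, lets a logarithmic cutoff work. Once density holds, the two infima coincide. Complex-valued radial $u$ give nothing new, since $|d^{A}u|^2$ splits over the real and imaginary parts because $A$ is orthogonal to $d\psi$.

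Next I identify the infimum with a Sturm--Liouville eigenvalue. I show that the embedding of $\cF_G$ into $L^2(0,M)$ is compact. Near $a=0$ this follows from $\int_0^\epsilon f^2\le \sup_{(0,\epsilon)} G\int f^2/G$, with $G\to0$, and near a regular endpoint it follows from the standard $H^1$ bound. Compactness gives a minimizer. The Euler--Lagrange equation is $-(Gf')'+4\pi^2\nu^2G^{-1}f=\kappa f$. The natural boundary terms produce $Gf'\to0$ at singular ends and $f'(M)=0$ at the regular end. Simplicity and positivity come from replacing $f$ by $|f|$, which leaves the quotient unchanged, and then applying the strong maximum principle or ODE uniqueness to the eigenfunction. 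Two independent eigenfunctions both satisfying $Gf'\to0$ would have a Wronskian $G(f_1f_2'-f_1'f_2)$ that is constant and vanishes at the end, so they are dependent.

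The inequalities $\lambda_1\le\kappa_1$ and $\lambda_1\le w_1$ are immediate from min--max, since $\mathcal R\subset H^1_{A^{(\nu)}}$.

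I expect the main obstacle to be the endpoint analysis at the pole. It is needed in two places: to show that the natural condition is exactly $\lim Gf'=0$, and to get the density and compactness there. Near $a=0$ the equation is Bessel-type of order $\nu$ in the variable $\sqrt a$. Its solutions behave like $a^{\pm\nu/2}$, and only $a^{\nu/2}$ has $\int f^2/G<\infty$. So I would first do a Frobenius-type asymptotic comparison with the model $G=4\pi a$ to confirm that the limit-point/limit-circle classification selects this branch.
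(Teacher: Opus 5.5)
Your proposal is correct and follows the paper's route. The paper derives exactly these coarea identities and then takes the one-dimensional Sturm--Liouville facts (existence via compactness, the natural conditions $Gf'\to0$ and $f'(M)=0$, positivity and simplicity) from Provenzano--Savo and Michetti--Provenzano--Savo, which is what you reprove.

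Two small corrections, both of which only simplify your argument:

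\begin{enumerate}
\item The density step is unnecessary. In the $t$-variable the energy is exactly $\int\bigl(g'^2+4\pi^2\nu^2g^2\bigr)\dd t$, and $G$ is bounded. So $g\circ\psi\in H^1_{A^{(\nu)}}$ precisely when $g\in H^1$, and $\mathcal R$ corresponds bijectively to $\cF_G$.
\item For $0<\nu<1$ the pole is a limit-circle endpoint, since both $a^{\pm\nu/2}$ lie in $L^2(0,M)$ near $0$. It is therefore the finite-energy condition $G^{-1/2}f\in L^2(0,M)$, equivalently $Gf'\to0$, that selects the branch $a^{\nu/2}$, not the limit-point/limit-circle classification.
\end{enumerate}
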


\subsection{Monotonicity and spherical comparison}
The following comparisons are established in \cite[Lemmas~3.2 and~5.1]{ProvenzanoSavo2026} and \cite[Lemmas~3.2, 3.3 and~4.3]{MichettiProvenzanoSavo2026}. Although the latter paper states monotonicity for a reduced flux, the argument requires only $\nu>0$. It therefore also applies to $1-\nu$ when $\nu\in(0,\frac{1}{2}]$.
\begin{lemma}\label{lem:monotonicity}
    Let $G_1$ and $G_2$ be smooth positive functions with $0<G_1\leq G_2$ on $(0,M)$, and let $\nu>0$.

    \begin{enumerate}[label=\textup{(\roman*)},leftmargin=2.3em]
        \item In the one-pole case, if $G_1,G_2$ satisfy \eqref{eq:G-one-asymptotic}, then
        \begin{equation*}
            \kappa_1(G_2;\nu)\leq\kappa_1(G_1;\nu).
        \end{equation*}

        \item In the two-pole case, if $G_1,G_2$ satisfy \eqref{eq:G-two-asymptotic}, then
        \begin{equation*}
            w_1(G_2;\nu)\leq w_1(G_1;\nu).
        \end{equation*}
    \end{enumerate}

    Each inequality is strict if $G_1<G_2$ on a set of positive measure.
\end{lemma}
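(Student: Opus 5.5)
The plan is to pass from the area variable $a$ back to a Green-level variable $t$. In that variable the energy does not depend on $G$, and all of the $G$-dependence moves into the $L^2$ norm. Write $c=4\pi^2\nu^2$ and, for $i=1,2$, set
\[
 \tau_i(a)=\int_a^M\frac{ds}{G_i(s)},\qquad 0<a\le M.
\]
By \eqref{eq:G-one-asymptotic}, $\tau_i(a)\to+\infty$ as $a\to0$, so each $\tau_i$ is a decreasing diffeomorphism of $(0,M]$ onto $[0,\infty)$. The hypothesis $G_1\le G_2$ gives $\tau_2\le\tau_1$ pointwise. If $f(a)=g(\tau_i(a))$, then $f'=-g'(\tau_i)/G_i$, and hence
\[
 \int_0^M\Bigl(G_if'^2+\frac{c}{G_i}f^2\Bigr)da=\int_0^\infty\bigl(g'(t)^2+c\,g(t)^2\bigr)dt,
 \qquad
 \int_0^M f^2\,da=\int_0^M g(\tau_i(a))^2\,da .
\]
So the numerator is the same for $G_1$ and $G_2$ once $g$ is fixed.

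Take $f_1$ to be the positive normalized minimizer for $\kappa_1(G_1;\nu)$ from the preceding Proposition, and let $g=f_1\circ\tau_1^{-1}$. Use $f_2=g\circ\tau_2$ as a trial function for $G_2$; one checks directly that $f_2\in\cF_{G_2}$. Suppose $g$ is nonincreasing on $(0,\infty)$, which is the same as $f_1$ being nondecreasing in $a$. Then $\tau_2\le\tau_1$ gives $g(\tau_2(a))\ge g(\tau_1(a))>0$ for every $a$, so the denominator for $f_2$ is at least $1$. Since the numerators coincide, this yields $\kappa_1(G_2;\nu)\le\kappa_1(G_1;\nu)$.

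For strictness, suppose $G_1<G_2$ on a set of positive measure. Then $\tau_2<\tau_1$ on an interval of $a$ adjacent to $0$. If $g$ is strictly decreasing, the denominator inequality is strict there, and the Rayleigh quotient of $f_2$ is strictly below $\kappa_1(G_1;\nu)$.

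The main obstacle is the monotonicity of the ground state. I would first try a Sturm--Liouville argument. Put $h=G_1f_1'$, so that $h(0^+)=h(M)=0$ and $h'=(c/G_1-\kappa)f_1$ with $f_1>0$. Near $a=0$ the coefficient $c/G_1$ blows up, so $h$ starts out increasing and $h>0$ initially. If $h$ vanished at some interior point, I would cut $f_1$ there and replace it on the right by a constant. Concretely, I would replace $g$ by $\tilde g(t)=\min\{g(t),\,\max_{[0,t]}g\}$, or by the running infimum from the left in $t$, and show that this does not increase $\int(g'^2+cg^2)\,dt$. It should also not decrease the weighted $L^2$ norm relative to the energy, which gives a contradiction with simplicity of the ground state.

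Alternatively, one could bypass monotonicity entirely. Minimize the $G_2$-quotient over the set of $g$ that are nonincreasing in $t$, and show that the minimizer there still solves the Euler--Lagrange equation. In the $t$ variable the potential $c\,g^2$ is constant in $t$, so decreasing rearrangement on the half-line (Pólya--Szegő) lowers the energy. Moreover, the weight $G_i(a_i(t))\,|da_i/dt|$ arises from an area measure and is placed to favour large values at small $t$. I expect this step to be checked carefully.

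The two-pole case (ii) follows the same scheme with $t\in\mathbb R$. Using \eqref{eq:G-two-asymptotic}, both ends $a\to0$ and $a\to M$ correspond to $t\to\pm\infty$. The ground state is now unimodal rather than monotone. I would normalize the Green levels by choosing the additive constant in $\tau_i$ at the point $a_0$ where $f_1$ attains its maximum. With that choice the comparison $|\tau_2-t_0|\le|\tau_1-t_0|$ holds on each side of $a_0$, and the one-pole argument is applied separately on $(0,a_0)$ and $(a_0,M)$. The strictness claim follows in the same way as in case (i).
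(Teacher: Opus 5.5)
Your change of variables is correct and gives the right picture. With $t=\tau_i(a)$ the energy $\int_0^\infty(g'^2+cg^2)\,dt$ does not depend on $G_i$, and all the dependence sits in the weight $w_i(t)=G_i(a_i(t))$ of the denominator.

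\textbf{The gap.} It is the step you flag as the main obstacle: the ground state $f_1$ is in general \emph{not} nondecreasing in $a$. Equivalently, $g$ need not be nonincreasing in $t$, so no truncation or rearrangement can prove it. In your notation, $h=G_1f_1'$ satisfies $h(M)=0$ and $h'(M)=\bigl(c/G_1(M)-\kappa_1(G_1;\nu)\bigr)f_1(M)$. Hence $f_1$ is strictly decreasing near $a=M$ as soon as $G_1(M)<c/\kappa_1(G_1;\nu)$.

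This already happens for the model. For $G_1=G^\star$ on a cap of radius $R$, $c/G^\star(M)=\nu^2/\sin^2R\to\infty$ as $R\to\pi$, while $\kappa_1(G^\star;\nu)=\eta^N_{\nu,0}(R)\to\nu(\nu+1)$. In radial form, $v'(R)=0$ and $v''(R)=(\nu^2/\sin^2R-\eta)v(R)>0$. It also happens for the truncated coefficients $G^\star_\varepsilon$, where $G^\star_\varepsilon(M)=\varepsilon$. These are exactly the coefficients to which the paper applies the lemma as $G_1$.

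Without monotonicity the pointwise bound $g(\tau_2)\ge g(\tau_1)$ is lost. For finite deformations the denominator of $g\circ\tau_2$ can genuinely drop below $1$: move a weight bump lying on the increasing part of $g$ toward $t=0$.

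Your fallbacks do not repair this.
\begin{itemize}
\item Over nonincreasing profiles, the $G_1$-infimum is strictly larger than $\kappa_1(G_1;\nu)$ whenever the simple ground state is non-monotone.
\item P\'olya--Szeg\H{o} does not control $\int g^2w\,dt$, because $w$ need not be monotone; already $G^\star$ decreases on $(2\pi,4\pi)$.
\item The two-pole ground state need not be unimodal. A weight with two bumps, e.g.\ a dumbbell with one pole in each bulb, gives a bimodal ground state.
\end{itemize}

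\textbf{What does hold.} Every ground state satisfies the logarithmic-derivative bound $|G_1f_1'|<2\pi\nu f_1$, i.e.\ $|g'|<\sqrt c\,g$. Indeed, $u=g'/g$ solves $u'=c-u^2-\kappa w_1$ with $u(0)=0$.
\begin{itemize}
\item At any point where $u=\sqrt c$ we have $u'<0$, so $u$ never reaches $\sqrt c$.
\item If $u(t_0)\le-\sqrt c$, then $u'\le c-u^2$ drives $u$ to $-\infty$ in finite time, which forces a zero of $g$.
\item In case (ii), run the argument in both directions, using $u\to\pm\sqrt c$ as $t\to\mp\infty$.
\end{itemize}

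This bound is exactly why your transplant works to first order. The equation $-g''+cg=\kappa w_1g$ gives $\int_s^\infty(g^2)'w_1\,dt=\kappa^{-1}\bigl(g'(s)^2-cg(s)^2\bigr)\le0$.

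To reach finite distance, deform $G_s=(1-s)G_1+sG_2$ and let $f_s$ be the normalized ground state for $G_s$. Applying the bound to $f_s$ gives
\[
\frac{d}{ds}\kappa_1(G_s;\nu)=\int_0^M(G_2-G_1)\Bigl(f_s'^2-\frac{4\pi^2\nu^2}{G_s^2}f_s^2\Bigr)\dd a\le0 .
\]
An upper Dini derivative together with continuity in $s$ suffices. The inequality is strict when $G_1<G_2$ on a set of positive measure.

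The paper itself does not prove this lemma; it imports it from Provenzano--Savo and Michetti--Provenzano--Savo.
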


Let $\Omega^\star\subset\Sph$ be a geodesic disk of radius $R$ and center $p^*$, and set $M=|\Omega^\star|$. In geodesic polar coordinates $(r,\theta)$ centered at $p^*$, its Green function is
\begin{equation*}
    \psi_{p^*}(r)=-\frac1{2\pi}\log\left(\frac{\tan(r/2)}{\tan(R/2)}\right).
\end{equation*}
The corresponding potential and level coefficient are
\begin{equation*}
    A_{p^*}^{(\nu)}=-2\pi\nu*d\psi_{p^*}=\nu\,d\theta
\end{equation*}
and
\begin{equation*}
    G^\star(a)=G_{p^*}(a)=a(4\pi-a), \qquad 0<a<M.
\end{equation*}
These explicit computations are given in \cite[Section~3.2]{MichettiProvenzanoSavo2026}. 
Applying the sharp isoperimetric inequality \cite[Isoperimetric Inequality~(I)]{ChavelFeldman1980} to each superlevel set of $\psi_p$ gives the following comparison.
\begin{lemma}\label{lem:spherical-level-comparison}
    Let $\Omega$ be a compact simply connected surface with smooth boundary, Gaussian curvature at most one and $|\Omega|=M<4\pi$. Then, for every $p\in\Omega$,
    \begin{equation*}
        G_p(a)\geq G^\star(a), \qquad 0<a<M.
    \end{equation*}
    Equality holds almost everywhere if and only if $(\Omega,p)$ is isometric to $(\Omega^\star,p^*)$.
\end{lemma}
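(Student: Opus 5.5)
We prove \cref{lem:spherical-level-comparison} by comparing, level by level, the coefficient $G_p(a)$ with the perimeter of the superlevel set $\Omega_t=\{\psi_p>t\}$, where $t=\beta_p(a)$. Let $L(t)$ denote the length of $\{\psi_p=t\}$. The normalization $\int_{\{\psi_p=t\}}|d\psi_p|\ds=1$ (the flux of the Green function) and Cauchy--Schwarz on the level curve give
\begin{equation*}
 L(t)^2=\Bigl(\int_{\{\psi_p=t\}}|d\psi_p|^{1/2}|d\psi_p|^{-1/2}\ds\Bigr)^2\leq \int_{\{\psi_p=t\}}|d\psi_p|\ds\int_{\{\psi_p=t\}}\frac{\ds}{|d\psi_p|}=G_p(a).
\end{equation*}

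Next we bound $L(t)$ from below. Each $\Omega_t$ is a topological disk with smooth boundary: $\psi_p$ has no critical points and its level lines are simple closed curves, by the conformal parametrization $\Phi^*\psi_p=-\frac1{2\pi}\log r$. Its area is $a<M<4\pi$, and the curvature is at most one. The sharp spherical isoperimetric inequality of Chavel--Feldman (I) therefore gives
\begin{equation*}
 L(t)^2\geq a(4\pi-a)=G^\star(a).
\end{equation*}
Combining the two displays yields $G_p(a)\geq G^\star(a)$ for all $0<a<M$. As a check, in the model cap $|d\psi_{p^*}|$ is constant on each geodesic circle, so both inequalities become equalities there, consistent with $G^\star(a)=a(4\pi-a)$.

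For the equality case, suppose $G_p=G^\star$ almost everywhere. By continuity of both sides, equality then holds for every $a$, so both inequalities are equalities at every level. The equality case in Chavel--Feldman forces each $\Omega_t$ to be isometric to a geodesic disk of curvature one. Letting $t\to0$, $\Omega_t$ exhausts $\Omega$, so $\Omega$ is isometric to the cap $\Omega^\star$.

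It remains to show that $p$ corresponds to the center $p^*$. Equality in Cauchy--Schwarz makes $|d\psi_p|$ constant on each level curve. Hence the level curves are parallel curves of $\partial\Omega$: the gradient flow of $\psi_p$, reparametrized by arclength, is geodesic, because the level sets then have constant distance from each other. In a cap, parallel curves of the boundary are concentric circles, and they shrink to the pole $p$, which must therefore be the center.

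This last identification is the main obstacle. The rigidity statement in Chavel--Feldman is stated for a single domain, whereas here we need the nested family of superlevel sets to be concentric. If the parallel-curve argument proves delicate, we would instead use the uniqueness of the Green function: once $\Omega$ is isometric to the cap, the function $\psi_p$ is radial about $p$, since its level curves are circles with $|d\psi_p|$ constant on each of them. In the cap, the Dirichlet Green function with pole $q$ has circular level sets only when $q$ is the center, because an off-center pole is seen through a Möbius map to produce non-concentric circles whose boundary level would not be the cap boundary. The converse direction, that the centered cap gives equality, is immediate from the explicit formula $G^\star(a)=a(4\pi-a)$.
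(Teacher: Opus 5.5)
Your proposal is correct and follows the paper's route: the paper simply applies the Chavel--Feldman inequality to each superlevel set $\{\psi_p>t\}$, with your flux/Cauchy--Schwarz step $L(t)^2\leq G_p(a)$ as the implicit link, and leaves rigidity to the cited works. Your parallel-curve argument for placing $p$ at the center is sound, and the exhaustion step is cleaner if you instead apply the same equality at the regular level $t=0$, that is, to $\Omega$ itself. Drop the fallback argument, though: for an off-center pole the level sets in a cap are Apollonius circles, so they are still spherical circles and the zero level is still the cap boundary, hence circularity alone does not single out the center.
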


Combining \cref{lem:monotonicity,lem:spherical-level-comparison}, we obtain, for every $\nu>0$,
\begin{equation}\label{eq:kappa-spherical-comparison}
    \kappa_1(\Omega,A_p^{(\nu)}) \leq \kappa_1(\Omega^\star,A_{p^*}^{(\nu)}).
\end{equation}
By the strict part of \cref{lem:monotonicity}, equality holds if and only if $(\Omega,p)$ is isometric to $(\Omega^\star,p^*)$.

\subsection{A two-dimensional reciprocal variational principle}
We use the following two-dimensional matrix form of the reciprocal variational principles of Hersch and of Hile and Xu \cite{Hersch1961,HileXu1993}.
\begin{lemma}\label{lem:trace-ritz}
    Let $A$ be one of the Aharonov--Bohm potentials considered above, with flux $\nu\in[0,1/2]$, and let $\phi_1,\phi_2 \in H_A^1(M)$ be linearly independent trial functions. Define their mass and energy matrices by
    \begin{equation*}
        \Mmat_{ij}=\int_M\phi_i\overline{\phi_j}\dv,
        \qquad
        \Kmat_{ij}=\qform_A(\phi_i,\phi_j)=\int_M\langle d^A\phi_i,d^A\phi_j\rangle_{\mathbb C}\dv.
    \end{equation*}
    Then the following assertions hold.
    \begin{enumerate}[label=\textup{(\roman*)},leftmargin=2.3em]
        \item If $0<\nu\leq1/2$, then
        \begin{equation*}
            \frac1{\lambda_1(M,A)}+\frac1{\lambda_2(M,A)}
            \geq\tr(\Kmat^{-1}\Mmat).
        \end{equation*}

        \item If $\nu=0$, then by gauge invariance we may assume that $A=0$. If
        \begin{equation*}
            \int_M\phi_1\dv=\int_M\phi_2\dv=0,
        \end{equation*}
        then
        \begin{equation*}
            \frac1{\mu_2(M)}+\frac1{\mu_3(M)}
            \geq\tr(\Kmat^{-1}\Mmat).
        \end{equation*}
    \end{enumerate}
    In either case, equality implies that $\Span_{\mathbb C}\{\phi_1,\phi_2\}$ is invariant under the corresponding operator and is generated by eigenfunctions associated with the two eigenvalues on the left.
\end{lemma}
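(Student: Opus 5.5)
The plan is to restrict the quadratic form $\qform_A$ to the two-dimensional space $V=\Span_{\mathbb C}\{\phi_1,\phi_2\}$ and compare the compressed spectrum with the true one through min--max, in the manner of Hersch and Hile--Xu. Since $\phi_1,\phi_2$ are linearly independent, $\Mmat$ is Hermitian positive definite.

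\textbf{Step 1: $\Kmat$ is positive definite.} In case (i) this follows from \cref{lem:gauge}. For a nonintegral flux, $\ker\Delta_A=\{0\}$, so $\qform_A(u,u)>0$ for every $u\neq0$. In case (ii) we take $A=0$. A nonzero $u\in V$ with $\int|du|^2=0$ would be constant (connectedness), and a constant with zero mean vanishes. So $\Kmat$ is invertible and $\tr(\Kmat^{-1}\Mmat)$ makes sense.

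\textbf{Step 2: reduce to a Ritz problem.} Let $\rho_1\le\rho_2$ be the eigenvalues of the pencil $\Kmat c=\rho\Mmat c$, i.e.\ the eigenvalues of $\Mmat^{-1}\Kmat$. These are the Ritz values of $\qform_A$ on $V$. Then
\[
\tr(\Kmat^{-1}\Mmat)=\rho_1^{-1}+\rho_2^{-1}.
\]
It therefore suffices to prove $\lambda_1\le\rho_1$ and $\lambda_2\le\rho_2$ in case (i). In case (ii) the target is $\mu_2\le\rho_1$ and $\mu_3\le\rho_2$.

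\textbf{Step 3: min--max comparison.} In case (i), choose an $\Mmat$-orthonormal eigenbasis $v_1,v_2$ of the pencil.
\begin{itemize}
\item For $\lambda_2\le\rho_2$, use $V$ itself in min--max: $\lambda_2\le\max_V \qform_A(u,u)/\|u\|^2=\rho_2$.
\item For $\lambda_1\le\rho_1$, use the trial function $v_1$.
\end{itemize}
In case (ii), work in the space $H^1$ restricted to the orthogonal complement of constants. There the operator has eigenvalues $\mu_2\le\mu_3\le\cdots$, and $V$ lies in this subspace, so the same argument applies.

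Since $t\mapsto 1/t$ is decreasing, we get
\[
\lambda_1^{-1}+\lambda_2^{-1}\ge\rho_1^{-1}+\rho_2^{-1},
\]
and likewise in case (ii).

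\textbf{Step 4: the equality case.} This is the only delicate step. Equality forces $\lambda_1=\rho_1$ and $\lambda_2=\rho_2$, because each term is individually dominated.
\begin{itemize}
\item From $\lambda_1=\rho_1=\qform_A(v_1,v_1)$ with $\|v_1\|=1$, $v_1$ attains the bottom of the Rayleigh quotient, so it is an eigenfunction for $\lambda_1$.
\item Then $v_2$ is $L^2$-orthogonal to $v_1$ and satisfies $\qform_A(v_2,v_2)=\lambda_2$. Minimizing the Rayleigh quotient over the orthogonal complement of $v_1$ gives exactly $\lambda_2$, since $v_1$ is a first eigenfunction. Hence $v_2$ is a minimizer there, and by the standard Euler--Lagrange argument (the form is Hermitian and $v_1$ is an eigenvector) it is an eigenfunction for $\lambda_2$.
\end{itemize}
Thus $V$ is spanned by eigenfunctions for the two eigenvalues and is invariant under the operator. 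Case (ii) is identical inside the mean-zero subspace.

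The main care needed is that the minimizer argument on the orthogonal complement uses the self-adjoint realization from the form domain \eqref{eq:magnetic-form-domain}, which holds by construction.
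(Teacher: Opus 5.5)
Your proposal is correct and follows the paper's route. The paper's proof consists only of your Step 1 (positive definiteness of $\Kmat$ via \cref{lem:gauge} in case (i), and the connectedness plus zero-mean argument in case (ii)), and then cites the Rayleigh--Ritz argument of \cite[Lemma~2.1]{HeLiTang2026}; your Steps 2--4 write out that cited argument (Ritz values of the pencil, the min--max comparison, and the minimizer characterization in the equality case), and they are sound.
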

\begin{proof}
    The Rayleigh--Ritz argument in \cite[Lemma~2.1]{HeLiTang2026} applies once $\Kmat$ is known to be positive definite. It suffices to verify this property.

    For $c=(c_1,c_2)^\dagger\in\mathbb C^2$, set $\phi=c_1\phi_1+c_2\phi_2$. Then
    \begin{equation*}
        c^\dagger\Kmat c=\int_M|d^A\phi|^2\dv.
    \end{equation*}
    Suppose that $c^\dagger\Kmat c=0$.
    If $0<\nu\leq1/2$, then $d^A\phi=0$, and \cref{lem:gauge} implies that $\phi=0$ because $A$ has a nonintegral normalized period.
    If $\nu=0$, then $A=0$ and $d\phi=0$, so $\phi$ is constant. Since $\phi_1$ and $\phi_2$ have zero mean, $\phi$ also has zero mean and hence vanishes.
    In either case, the linear independence of $\phi_1$ and $\phi_2$ implies that $c=0$. Therefore, $\Kmat$ is positive definite, and the stated conclusions follow.
\end{proof}
The two Ritz values are the generalized eigenvalues of $\Kmat a=\eta\Mmat a$; their reciprocal sum is $\tr(\Kmat^{-1}\Mmat)$.
In particular, if $\Kmat=\operatorname{diag}(E_1,E_2)$, then
\begin{equation*}
    \tr(\Kmat^{-1}\Mmat)=\frac{\|\phi_1\|_2^2}{E_1}+\frac{\|\phi_2\|_2^2}{E_2}.
\end{equation*}

\section{Proof of \cref{thm:A}}\label{sec:ordinary}
Throughout this section, $\Omega\subset\Sph$ is a smooth simply connected proper domain, and $\Omega^\star\subset\Sph$ is a geodesic disk with $|\Omega^\star|=|\Omega|$. Let $p^*$ be the center of $\Omega^\star$ and set $\nu=1$. For each $p\in\Omega$, let $A_p$ be the unit-flux potential in \eqref{eq:unit-flux-potential}. We use this notation in the proof of \cref{thm:A}.

\subsection{Radial comparison and centering}
For each $p\in\Omega$, let $u_p\in\mathcal R_p(\Omega)$ be the positive Green-level minimizer associated with $\kappa_1(\Omega,A_p)=\kappa_1(G_p;1)$, normalized by
\begin{equation}\label{eq:up-normalization}
    \int_\Omega u_p^2\dv=1.
\end{equation}

We use the following eigenvalue comparison and centering results of Provenzano and Savo \cite[Theorems~4.1--4.3 and Claim~7.1]{ProvenzanoSavo2026}. 
The inequality and the first identity in \textup{(i)} are their Theorems~4.1 and~4.2; the remaining identities follow from gauge invariance and rotational symmetry. The inequality also follows from \eqref{eq:kappa-spherical-comparison} with $\nu=1$. 
Assertion~\textup{(ii)} is the centering property in the proof of their Theorem~4.3, formulated explicitly in Claim~7.1.
\begin{proposition}\label{prop:PS-inputs}
    The following assertions hold.
    \begin{enumerate}[label=\textup{(\roman*)},leftmargin=2.3em]
        \item For every $p\in\Omega$,
        \begin{equation*}
            \kappa_1(\Omega,A_p)\leq\kappa_1(\Omega^\star,A_{p^*})=\lambda_2(\Omega^\star,A_{p^*})=\mu_2(\Omega^\star)=\mu_3(\Omega^\star).
        \end{equation*}
        Equality in the first inequality holds if and only if $(\Omega,p)$ is isometric to $(\Omega^\star,p^*)$.

        \item There exists $\bar p\in\Omega$ such that
        \begin{equation}\label{eq:PS-centering}
            \int_\Omega u_{\bar p}F_{\bar p}\dv=0,
        \end{equation}
        where $F_{\bar p}$ is an $\mathbb{S}^{1}$-valued gauge factor satisfying $dF_{\bar p}=\ii F_{\bar p}A_{\bar p}$.
    \end{enumerate}
\end{proposition}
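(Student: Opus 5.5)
For assertion (i), I would first apply \eqref{eq:kappa-spherical-comparison} with $\nu=1$. The hypotheses of \cref{lem:spherical-level-comparison} hold: $\Omega$ has curvature one, and since it is a proper smooth simply connected domain, $|\Omega|<4\pi$. This gives $\kappa_1(\Omega,A_p)\leq\kappa_1(\Omega^\star,A_{p^*})$ for every $p\in\Omega$. The equality case follows from the strict part of \cref{lem:monotonicity} together with the rigidity in \cref{lem:spherical-level-comparison}.

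Next I would identify the model value $\kappa_1(\Omega^\star,A_{p^*})$. On the cap, $A_{p^*}=d\theta$ and the Green-level class consists of radial functions $g(r)$. For such functions $|d^{A_{p^*}}g|^2=g'^2+g^2/\sin^2r$, which is exactly the energy density of $g(r)e^{\ii\theta}$ for the ordinary Laplacian. Hence $\kappa_1(\Omega^\star,A_{p^*})=\eta^N_{1,0}(R)$, the lowest radial Neumann eigenvalue of angular order one. Since $A_{p^*}$ has integer flux, \cref{lem:gauge} gives $\lambda_k(\Omega^\star,A_{p^*})=\mu_k(\Omega^\star)$. The gauge factor is $F=e^{\ii\theta}$, and it carries the radial minimizer $g$ to the ordinary eigenfunction $g e^{\ii\theta}$.

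It therefore remains to show that $\mu_2(\Omega^\star)=\mu_3(\Omega^\star)=\eta^N_{1,0}(R)$. Equivalently, among the positive separated eigenvalues $\eta^N_{n,j}$, the minimum is attained at $n=1,j=0$. The modes $n=\pm1$ (that is, $g\cos\theta$ and $g\sin\theta$) then supply the multiplicity two, and then $\lambda_2=\mu_2$ because $\lambda_1=\mu_1=0$. The sectors $n\geq2$ are excluded by monotonicity of the radial Neumann eigenvalues in the angular order. The only delicate competitor is the first radial excitation $\eta^N_{0,1}$. For it I would use the factorization $L_0=T^*T$, $L_1=TT^*$ from the introduction, which yields $\eta^N_{0,1}(R)=\eta^D_{1,0}(R)$, and then strict Dirichlet--Neumann separation gives $\eta^D_{1,0}(R)>\eta^N_{1,0}(R)$. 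Alternatively, this part can simply be quoted from Provenzano--Savo's Theorems~4.1--4.2.

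For assertion (ii) I would use a topological degree argument. For $p\in\Omega$, let $\Phi_p:\D\to\Omega$ be conformal with $\Phi_p(0)=p$, chosen to depend continuously on $p$ by composing a fixed map with disk automorphisms. Set $F_p(x)=z/|z|$ with $z=\Phi_p^{-1}(x)$. Then $dF_p=\ii F_pA_p$, since $\Phi_p^*A_p=d\theta$. Define
\[
V(p)=\int_\Omega u_pF_p\dv\in\mathbb C .
\]
Continuity of $V$ requires continuity of $p\mapsto G_p$ and of the simple positive minimizer of the Sturm--Liouville problem, which I would obtain by standard perturbation arguments.

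The key step is the behaviour of $V$ as $p\to q\in\partial\Omega$. In the disk picture the pole tends to the boundary point $\Phi^{-1}(q)$. I expect the weighted mass of $u_p$ to concentrate so that $V(p)$ becomes nonzero and asymptotically parallel to the direction of $q$, with $V(p)/|V(p)|\to \Phi^{-1}(q)/|\Phi^{-1}(q)|$. Then $V/|V|$ restricted to a curve near $\partial\Omega$ has degree one, so $V$ must vanish at some $\bar p$. Establishing these boundary asymptotics uniformly is the main obstacle. I would handle it as in Provenzano--Savo's Claim~7.1, by estimating the limit of $u_p$ pulled back by Möbius maps and checking that the phase contributions do not cancel.
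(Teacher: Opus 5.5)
Your proposal is correct. For (ii) it matches the paper: the paper imports the centering property from Provenzano--Savo's Claim~7.1, and your degree argument on $V(p)=\int_\Omega u_pF_p\dv$ is the standard structure behind that claim, with the same external input for the boundary asymptotics.

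The real difference is in (i). The paper takes the inequality either from Provenzano--Savo's Theorem~4.1 or from \eqref{eq:kappa-spherical-comparison} with $\nu=1$, as you do. It then takes the model identification $\kappa_1(\Omega^\star,A_{p^*})=\lambda_2(\Omega^\star,A_{p^*})$ from their Theorem~4.2 and adds only gauge invariance and rotational symmetry. You prove that identification inside the paper instead:
\begin{itemize}
\item $\kappa_1(\Omega^\star,A_{p^*})=\eta^N_{1,0}(R)$;
\item the sectors $|n|\geq2$ are excluded by \cref{lem:cap-angular-monotonicity};
\item the only dangerous competitor $\eta^N_{0,1}(R)$ is excluded by the intertwining $L_0=T^*T$, $L_1=TT^*$ and the strict Dirichlet--Neumann gap of \cref{lem:radial-ND-shift}.
\end{itemize}
This is non-circular, since the appendix lemmas do not use \cref{prop:PS-inputs}. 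It has the merit of showing that the classical fact (the first positive Neumann eigenfunctions of a cap are $v(r)\e^{\pm\ii\theta}$) is the integer-flux case of the same no-crossing mechanism that drives \cref{thm:cap-angular-ordering}. The paper's route is simply shorter.

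One correction to your heuristic for (ii), in case you carry it out. Take $w=\Phi^{-1}(p)$ and $\Phi_p=\Phi\circ m_w$ with $m_w(z)=(z+w)/(1+\bar wz)$. Then $F_p(\Phi(\zeta))=\varphi/|\varphi|$, where $\varphi=(\zeta-w)/(1-\bar w\zeta)$. As $w\to\omega\in\partial\D$, $\varphi\to-\omega$ locally uniformly on $\overline{\D}\setminus\{\omega\}$.

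So the mass of $u_p$ does not concentrate. What happens is that the phase $F_p$ becomes constant on the part of $\Omega$ carrying almost all the area. Meanwhile, write $u_p=v_p(|\eta|)$ in the disk coordinate $\eta$ centered at $p$. The conformally invariant bound $2\pi\int_0^1\bigl((v_p')^2+v_p^2/r^2\bigr)r\dd r=\kappa_1(\Omega,A_p)\leq\mu_2(\Omega^\star)$ from (i) then does two things:
\begin{itemize}
\item it forces $v_p$ to have vanishing oscillation on the thin outer annulus where that area lives;
\item read in the fixed coordinate $\zeta$, it keeps $u_p^2$ uniformly integrable.
\end{itemize}
Hence $V(p)\to-|\Omega|^{1/2}\omega$. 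For this normalization the limiting direction is $-\Phi^{-1}(q)$, not $+\Phi^{-1}(q)$. Since $\omega\mapsto-\omega$ also has degree one, your conclusion is unaffected. This mechanism essentially closes the step you deferred.
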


\subsection{The two real trial functions}
Fix $\bar p$ as in \cref{prop:PS-inputs} and write
\begin{equation*}
    u=u_{\bar p}, \qquad A=A_{\bar p}, \qquad F=F_{\bar p}, \qquad \kappa=\kappa_1(\Omega,A_{\bar p}).
\end{equation*}
Writing $F=\e^{\ii\Theta}$ locally, define
\begin{equation*}
    P_1=u\cos\Theta, \qquad P_2=u\sin\Theta.
\end{equation*}
These functions are globally defined by $P_1-\ii P_2=uF^{-1}$. Moreover, $d(uF^{-1})=F^{-1}d^Au$, and the complex conjugate of \eqref{eq:PS-centering} gives
\begin{equation*}
    \int_\Omega P_1\dv=\int_\Omega P_2\dv=0.
\end{equation*}
The functions are linearly independent because $u>0$ away from $\bar p$ and $F$ has winding number one along each Green level line.

Let $\Mmat$ and $\Kmat$ be the mass and energy matrices of $P_1,P_2$. Since $P_1^2+P_2^2=u^2$, \eqref{eq:up-normalization} gives
\begin{equation}\label{eq:ordinary-mass-trace}
    \tr\Mmat=1.
\end{equation}
Choose a conformal map $\Phi:\D\to\Omega$ with $\Phi(0)=\bar p$. After multiplying $F$ by a constant of modulus one, we may write
\begin{equation*}
    \Phi^*\psi_{\bar p}=-\frac1{2\pi}\log r, \qquad \Phi^*F=\e^{\ii\theta}, \qquad \Phi^*u=v(r).
\end{equation*}
Hence
\begin{equation*}
    \Phi^*P_1=v(r)\cos\theta, \qquad \Phi^*P_2=v(r)\sin\theta.
\end{equation*}
The bilinear Dirichlet energy is conformally invariant in dimension two. Integrating in $\theta$, we obtain
\begin{equation*}
    \Kmat_{11}=\Kmat_{22}=\pi\int_0^1\left(rv'(r)^2+\frac{v(r)^2}{r}\right)\dd r, \qquad \Kmat_{12}=0.
\end{equation*}
Thus $\Kmat=k\Id_2$ for some $k>0$. On the other hand,
\begin{equation*}
    2k=\int_\Omega\left(|dP_1|^2+|dP_2|^2\right)\dv=\int_\Omega|d(uF^{-1})|^2\dv=\int_\Omega|d^Au|^2\dv=\kappa.
\end{equation*}
Consequently,
\begin{equation}\label{eq:ordinary-K-scalar}
    \Kmat=\frac\kappa2\Id_2.
\end{equation}

\begin{proof}[Proof of \cref{thm:A}]
    By \cref{lem:trace-ritz}, \eqref{eq:ordinary-mass-trace}, and \eqref{eq:ordinary-K-scalar},
    \begin{equation}\label{eq:ordinary-trace-bound}
        \frac1{\mu_2(\Omega)}+\frac1{\mu_3(\Omega)}\geq\tr(\Kmat^{-1}\Mmat)=\frac2\kappa\geq\frac2{\mu_2(\Omega^\star)},
    \end{equation}
    where the last inequality follows from \cref{prop:PS-inputs}. This is equivalent to \eqref{eq:main-ordinary-intro}.

    If $\Omega$ is a geodesic disk, choose $\bar p$ to be its center. Then $\kappa=\mu_2(\Omega^\star)$, and the trial functions $P_1,P_2$ span the first positive eigenspace of $\Omega$. Hence equality holds in \eqref{eq:main-ordinary-intro}.
    Conversely, suppose that equality holds in \eqref{eq:main-ordinary-intro}. Then both inequalities in \eqref{eq:ordinary-trace-bound} are equalities, and in particular $\kappa=\mu_2(\Omega^\star)$. The rigidity statement in \cref{prop:PS-inputs}\textup{(i)} implies that $(\Omega,\bar p)$ is isometric to $(\Omega^\star,p^*)$. Thus $\Omega$ is a geodesic disk.
\end{proof}

The same argument gives the following extension to compact simply connected surfaces with an upper bound on Gaussian curvature. At $K>0$ and $|\Omega|_g=4\pi/K$, the required radial comparison also follows from \cref{lem:full-sphere-sector} with $s=1$.
\begin{corollary}\label{cor:ordinary-curvature}
    Let $(\Omega,g)$ be a compact simply connected Riemannian surface with nonempty smooth boundary and Gaussian curvature bounded above by $K\in\mathbb R$. Assume
    \begin{equation*}
        4\pi-K|\Omega|_g\geq0.
    \end{equation*}
    Let $\Omega_K^\star$ be the constant-curvature-$K$ geodesic disk of area $|\Omega|_g$. When $K>0$ and $|\Omega|_g=4\pi/K$, the model $\Omega_K^\star$ is understood as the full sphere $\SphK$. Then
    \begin{equation*}
        \frac1{\mu_2(\Omega,g)}+\frac1{\mu_3(\Omega,g)}\geq\frac2{\mu_2(\Omega_K^\star)}.
    \end{equation*}
\end{corollary}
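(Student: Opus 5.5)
We follow the proof of \cref{thm:A} verbatim; the only new input is the radial comparison in the curvature-bounded setting. Fix a point $\bar p\in\Omega$, to be chosen later, and let $u=u_{\bar p}\in\mathcal R_{\bar p}(\Omega)$ be the positive $L^2$-normalized Green-level minimizer for $\kappa=\kappa_1(\Omega,A_{\bar p})$ with flux $\nu=1$. Let $F=F_{\bar p}$ be the integer-flux gauge factor from \cref{lem:gauge}, and set $P_1-\ii P_2=uF^{-1}$.

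The computation of $\Mmat$ and $\Kmat$ in the preceding subsection uses only three facts. First, $P_1^2+P_2^2=u^2$, which gives $\tr\Mmat=1$. Second, there is a conformal map $\Phi:\D\to\Omega$ with $\Phi(0)=\bar p$, and the conformal invariance of the Dirichlet energy gives $\Kmat=\frac{\kappa}{2}\Id_2$. Third, the centering $\int_\Omega uF\dv=0$. None of these uses the metric beyond conformal type, so \cref{lem:trace-ritz}(ii) yields
\[
\frac1{\mu_2(\Omega,g)}+\frac1{\mu_3(\Omega,g)}\ge \frac{2}{\kappa_1(\Omega,A_{\bar p})},
\]
provided $\bar p$ is centered.

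For centering, I would rerun the topological argument of Provenzano--Savo (their Claim~7.1). The map $p\mapsto \int_\Omega u_pF_p\dv\in\mathbb C$ is continuous on $\Omega$. Near $\partial\Omega$ it behaves like a vector field pointing outward, or at least it is homotopic to the identity on the boundary after identifying $\Omega\cong\D$ conformally. A degree argument then gives a zero. This step is purely conformal and should transfer unchanged, since the concentration of $u_p$ as $p\to\partial\Omega$ depends only on the conformal structure and the asymptotics \eqref{eq:G-one-asymptotic}.

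It remains to bound $\kappa_1(\Omega,A_{\bar p})\le\mu_2(\Omega_K^\star)$. We split into cases by the sign of $K$.

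\emph{Case $K>0$ and $|\Omega|<4\pi/K$.} Rescale the metric by $K$ to reduce to curvature at most $1$. Then \cref{lem:spherical-level-comparison}, combined with \cref{lem:monotonicity}, gives \eqref{eq:kappa-spherical-comparison} with $\nu=1$. The model value $\kappa_1(\Omega^\star_K,A_{p^*})$ equals $\mu_2(\Omega_K^\star)$ by \cref{prop:PS-inputs}(i), after scaling.

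\emph{Case $K\le0$.} The Chavel--Feldman/Bol isoperimetric inequality $L^2\ge 4\pi a-Ka^2$ applied to the superlevel sets gives $G_p(a)\ge a(4\pi-Ka)$, which is the level coefficient of the constant-curvature-$K$ disk. Monotonicity then compares with that disk. Its $\nu=1$ radial eigenvalue equals its $\mu_2$ by the same gauge and rotational-symmetry identification, which is standard for geodesic disks.

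\emph{Case $K>0$ and $|\Omega|=4\pi/K$.} This is the main obstacle, since the model is now the full sphere and its coefficient $G(a)=a(4\pi-Ka)$ vanishes at $a=M$. Here I would invoke \cref{lem:full-sphere-sector} with $s=1$, as the paper indicates. That lemma asserts that the one-dimensional quotient with coefficient at least $a(4\pi-Ka)$ on $(0,4\pi/K)$ is bounded by the sphere's sector eigenvalue $2K=\mu_2(\SphK)$. If needed, I would prove it directly by inserting the trial function $f$ corresponding to $\cos r$, namely $f(a)=1-Ka/2\pi$, into the quotient. By monotonicity in $G$, the energy is at most the model energy $2K\int f^2$, using that $G^{-1}f^2$ is integrable near $a=M$ only after checking the boundary behaviour. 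This integrability and the Neumann endpoint at $a=M$ are where care is needed.
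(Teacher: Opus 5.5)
Your main line is correct and follows the paper's route: the trial pair $P_1,P_2$ with $\tr\Mmat=1$ and $\Kmat=\frac\kappa2\Id_2$, and the Provenzano--Savo centering. The bound $\kappa_1(\Omega,A_{\bar p})\le\mu_2(\Omega_K^\star)$ comes from the Bol/Chavel--Feldman level comparison together with \cref{lem:monotonicity} below the threshold, and from \cref{lem:full-sphere-sector} with $s=1$ (giving $2K=\mu_2(\SphK)$) at $|\Omega|_g=4\pi/K$.

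You should, however, discard the fallback ``direct proof'' of the threshold case. The function $f(a)=1-Ka/2\pi$ is the profile of $\cos(\sqrt K r)$, which lies in the zonal sector $s=0$. The $s=1$ sector has profile $\sin(\sqrt K r)\propto\sqrt{a(4\pi-Ka)}$ instead. Since $f(0)=1$ and $G\sim4\pi a$ near $0$, the term $\int_0 4\pi^2G^{-1}f^2\dd a$ diverges, so $f\notin\cF_G$.

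Even with the correct profile, you cannot compare energies for a fixed $f$ in the area variable, because $Gf'^2$ increases with $G$. \cref{lem:monotonicity} compares infima, and its one-pole version presupposes a regular endpoint at $a=M$, which $a(4\pi-Ka)$ lacks. This is exactly why the paper truncates to $G_\varepsilon^\star$ and passes to the Dirichlet exhaustion limit.
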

\begin{remark}
    Provenzano and Savo established the sharp comparison $\mu_2(\Omega,g)\leq\mu_2(\Omega_K^\star)$ in \cite[Theorem~1.3]{ProvenzanoSavo2026}. Since $\mu_2(\Omega,g)\leq\mu_3(\Omega,g)$, \cref{cor:ordinary-curvature} also implies their inequality.
\end{remark}

\section{Harmonic mean inequalities for Aharonov--Bohm eigenvalues}\label{sec:magnetic}
Throughout this section, we assume $0<\nu\leq1/2$. Recall that, for $K>0$, the classes $\cM_K$ and $\overline{\cM}_K$ are defined in \cref{sec:introduction}. We consider surfaces $\Omega\in\cM_K$ and $\Sigma\in\overline{\cM}_K$, respectively.

\subsection{The two-dimensional trial space}\label{sec:two-sector}
Fix a point $p\in\Omega$ in the one-pole case or distinct points $z_1,z_2\in\Sigma$ in the two-pole case. For $s>0$, use the unified notation
\begin{equation*}
    \big(X,\psi,G,A,\kappa_1(G;s)\big)=
    \begin{cases}
        \big(\Omega,\psi_p,G_p,A_p,\kappa_1(G_p;s)\big), & \text{in the one-pole case},\\
        \big(\Sigma,\psi_{z_1,z_2},G_{z_1,z_2},A_{z_1,z_2},w_1(G_{z_1,z_2};s)\big), & \text{in the two-pole case},
    \end{cases}
\end{equation*}
and set $M=|X|$. In either case, set $A^{(\nu)}=\nu A$. Thus $A^{(\nu)}=A_p^{(\nu)}$ on $\Omega\setminus\{p\}$ and $A^{(\nu)}=A_{z_1,z_2}^{(\nu)}$ on $\Sigma\setminus\{z_1,z_2\}$, respectively.

Since all normalized periods of $A$ are integers, \cref{lem:gauge} provides a smooth $\mathbb{S}^{1}$-valued function $F$ on the corresponding punctured surface such that
\begin{equation*}
    dF=\ii FA.
\end{equation*}
Let $f_0$ and $f_1$ in $\cF_G$ be the positive $L^2(0,M)$-normalized minimizers for $\nu$ and $1-\nu$, respectively. Pulling them back by the area coordinate $a$, set
\begin{equation*}
    \phi_0=f_0(a), \qquad \phi_1=Ff_1(a).
\end{equation*}
A direct computation gives
\begin{equation*}
    d^{A^{(\nu)}}\phi_0=f_0'(a)\,da-\ii\nu Af_0(a), \qquad d^{A^{(\nu)}}\phi_1=F\Big(f_1'(a)\,da+\ii(1-\nu)Af_1(a)\Big).
\end{equation*}
The diagonal energies are
\begin{equation}\label{eq:energy-diagonal-energies}
    \qform_{A^{(\nu)}}(\phi_0,\phi_0)=\kappa_1(G;\nu),\qquad \qform_{A^{(\nu)}}(\phi_1,\phi_1)=\kappa_1(G;1-\nu),
\end{equation}
The functions $\phi_0$ and $\phi_1$ are linearly independent: $\phi_0$ is constant on each regular Green level line, whereas $\phi_1$ has winding number one.

It remains to compute the energy cross term. The identities $da=-G(a)\,d\psi$, $A\perp d\psi$, and $|A|=2\pi|d\psi|$ give
\begin{equation}\label{eq:cross-integrand-form}
    \left\langle d^{A^{(\nu)}}\phi_0,d^{A^{(\nu)}}\phi_1\right\rangle_{\mathbb C}=F^{-1}H(a)|d\psi|^2
\end{equation}
for a real function $H$ depending only on $a$. Along a regular level line, choose the orientation for which $d\Theta=A$ and $F=\e^{\ii\Theta}$. Since $d\Theta/ds=2\pi|d\psi|$,
\begin{equation*}
    \int_{\{\psi=t\}}F^{-1}|d\psi|\ds=\frac1{2\pi}\int_0^{2\pi}\e^{-\ii\Theta}\dd\Theta=0.
\end{equation*}
The coarea formula applied to \eqref{eq:cross-integrand-form} therefore yields
\begin{equation}\label{eq:energy-cross-zero}
    \qform_{A^{(\nu)}}(\phi_0,\phi_1)=0.
\end{equation}
\begin{proposition}\label{prop:two-sector}
    The following reciprocal estimates hold.
    \begin{enumerate}[label=\textup{(\roman*)},leftmargin=2.3em]
        \item In the one-pole case,
        \begin{equation*}
            \frac1{\lambda_1(\Omega,A_p^{(\nu)})}+\frac1{\lambda_2(\Omega,A_p^{(\nu)})}\geq\frac1{\kappa_1(G_p;\nu)}+\frac1{\kappa_1(G_p;1-\nu)}.
        \end{equation*}

        \item In the two-pole case,
        \begin{equation*}
            \frac1{\lambda_1(\Sigma,A_{z_1,z_2}^{(\nu)})}+\frac1{\lambda_2(\Sigma,A_{z_1,z_2}^{(\nu)})}\geq\frac1{w_1(G_{z_1,z_2};\nu)}+\frac1{w_1(G_{z_1,z_2};1-\nu)}.
        \end{equation*}
    \end{enumerate}
\end{proposition}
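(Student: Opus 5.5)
The plan is to apply \cref{lem:trace-ritz}\textup{(i)} to the pair $\phi_0,\phi_1$ built in \cref{sec:two-sector}. That lemma requires $0<\nu\leq1/2$ and two linearly independent functions in $H^1_{A^{(\nu)}}(X)$; linear independence has already been observed above. So it remains to check three things: that $\phi_0,\phi_1$ lie in the form domain, that the energy matrix $\Kmat$ is diagonal with the stated entries, and that the mass matrix $\Mmat$ has unit diagonal. The trace identity then yields
\[
\tr(\Kmat^{-1}\Mmat)=\frac{\Mmat_{11}}{\kappa_1(G;\nu)}+\frac{\Mmat_{22}}{\kappa_1(G;1-\nu)}=\frac1{\kappa_1(G;\nu)}+\frac1{\kappa_1(G;1-\nu)}.
\]
In the one-pole case $\kappa_1(G;\cdot)=\kappa_1(G_p;\cdot)$, and in the two-pole case it is $w_1(G_{z_1,z_2};\cdot)$, which gives both parts of the proposition at once.

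\emph{Membership in the form domain.} Since $|F|=1$ and $|A|=2\pi|d\psi|$, the coarea computation of the preceding section applies to both functions. It gives
\[
\int_X|d^{A^{(\nu)}}\phi_0|^2\dv=\int_0^M\Bigl(Gf_0'^2+\frac{4\pi^2\nu^2}{G}f_0^2\Bigr)\dd a<\infty,
\]
because $f_0\in\cF_G$. For $\phi_1$ the same formula holds with $\nu$ replaced by $1-\nu$ and $f_0$ replaced by $f_1$, because the term $\ii(1-\nu)Af_1$ is orthogonal to $f_1'\,da$. Hence $\phi_0,\phi_1\in H^1_{A^{(\nu)}}(X)$, and the diagonal energies are exactly \eqref{eq:energy-diagonal-energies}, using that $f_0,f_1$ are the minimizers. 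Together with \eqref{eq:energy-cross-zero}, this shows that $\Kmat=\operatorname{diag}(\kappa_1(G;\nu),\kappa_1(G;1-\nu))$.

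\emph{Mass matrix.} The coarea formula gives
\[
\int_X|\phi_0|^2\dv=\int_0^Mf_0^2\dd a=1,\qquad \int_X|\phi_1|^2\dv=\int_0^Mf_1^2\dd a=1.
\]
The off-diagonal entry $\int_X f_0f_1\overline F\dv$ also vanishes: along each level line, $\int F^{-1}|d\psi|^{-1}\ds$ vanishes by the same angular argument used for \eqref{eq:energy-cross-zero}, because on the level line $ds$ is proportional to $d\Theta/|d\psi|$. Either way, for diagonal $\Kmat$ only the diagonal entries of $\Mmat$ enter the trace, so the off-diagonal entry is not actually needed.

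\emph{Main obstacle.} The delicate point is justifying the form-domain membership and the coarea manipulations near the poles, including the endpoint $a=M$ in the two-pole case. There, $F$ is singular and $f_1$ must decay at the rate dictated by $G\sim4\pi a$, respectively $G\sim4\pi(M-a)$. I would handle this through the weighted conditions built into $\cF_G$, namely $\sqrt Gf'\in L^2$ and $G^{-1/2}f\in L^2$, which are exactly what make $d^{A^{(\nu)}}\phi_1$ square-integrable. The weak differential away from the poles is then justified by a cutoff argument near the poles, since a point has zero capacity in dimension two.
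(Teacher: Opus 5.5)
Your proposal is correct and is essentially the paper's proof: apply \cref{lem:trace-ritz}\textup{(i)} to $\phi_0,\phi_1$, with $\Kmat=\operatorname{diag}\bigl(\kappa_1(G;\nu),\kappa_1(G;1-\nu)\bigr)$ coming from the diagonal energies and the vanishing cross energy, and with $\Mmat_{11}=\Mmat_{22}=1$ coming from the coarea formula. Your aside that the mass cross term vanishes is false in general: on each level line it becomes $\frac1{2\pi}\int_0^{2\pi}\e^{-\ii\Theta}|d\psi|^{-2}\dd\Theta$, and $|d\psi|$ need not be constant along level lines; but, as you yourself note and as in the paper, only the diagonal of $\Mmat$ enters $\tr(\Kmat^{-1}\Mmat)$ when $\Kmat$ is diagonal, so the argument is unaffected.
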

\begin{proof}
    In the one-pole case, \eqref{eq:energy-diagonal-energies} and \eqref{eq:energy-cross-zero} give
    \begin{equation*}
        \Kmat=\operatorname{diag}\bigl(\kappa_1(G_p;\nu),\kappa_1(G_p;1-\nu)\bigr), \qquad \Mmat_{11}=\Mmat_{22}=1.
    \end{equation*}
    Hence \cref{lem:trace-ritz} implies the conclusion. The two-pole proof is identical, with $w_1(G_{z_1,z_2};\,\cdot\,)$ in place of $\kappa_1(G_p;\,\cdot\,)$.
\end{proof}

\subsection{Spherical caps and spectral ordering}\label{sec:caps}
We first assume $K=1$. Let $\Omega^\star\subset\Sph$ be the geodesic disk with $|\Omega^\star|=|\Omega|<4\pi$ and center $p^*$. By \eqref{eq:kappa-spherical-comparison}, applied to $\nu$ and $1-\nu$, and \cref{prop:two-sector},
\begin{equation}\label{eq:cap-sector-comparison}
    \frac1{\lambda_1(\Omega,A_p^{(\nu)})}+\frac1{\lambda_2(\Omega,A_p^{(\nu)})}\geq\frac1{\kappa_1(\Omega^\star,A_{p^*}^{(\nu)})}+\frac1{\kappa_1(\Omega^\star,A_{p^*}^{(1-\nu)})}.
\end{equation}
The spectral ordering proved in \cref{thm:cap-angular-ordering} identifies the two radial values on the right as
\begin{equation*}
    \lambda_1(\Omega^\star,A_{p^*}^{(\nu)})=\kappa_1(\Omega^\star,A_{p^*}^{(\nu)}), \qquad
    \lambda_2(\Omega^\star,A_{p^*}^{(\nu)})=\kappa_1(\Omega^\star,A_{p^*}^{(1-\nu)}).
\end{equation*}
These identities hold for every cap radius; at $\nu=1/2$, the two eigenvalues coincide. The radial analysis is given in \cref{sec:cap-appendix}.
\begin{proof}[Proof of \cref{thm:B}]
    The preceding identities and \eqref{eq:cap-sector-comparison} prove \eqref{eq:cap-literal-intro} when $K=1$.

    If equality holds, both radial comparisons used in \eqref{eq:cap-sector-comparison} must be equalities. In particular,
    \begin{equation*}
        \kappa_1(\Omega,A_p^{(\nu)})=\kappa_1(\Omega^\star,A_{p^*}^{(\nu)}).
    \end{equation*}
    The strict comparison in \cref{lem:monotonicity}, together with \cref{lem:spherical-level-comparison}, implies that $(\Omega,p)$ is isometric to $(\Omega^\star,p^*)$. Conversely, for a centered geodesic disk the trial pair consists of eigenfunctions associated with its first two magnetic eigenvalues, so equality holds.

    For general $K>0$, replace $g$ by $Kg$. The rescaled metric has Gaussian curvature at most one, its area is $K|\Omega|<4\pi$, and its eigenvalues are $K^{-1}$ times those of $g$. Applying the preceding result and scaling back proves the theorem, with the same equality characterization.
\end{proof}

\subsection{Large-area and closed surfaces}
We use the following radial consequence of the full-sphere comparison argument of Michetti, Provenzano, and Savo; see \cite[Sections~4.3--4.6 and Lemma~A.3]{MichettiProvenzanoSavo2026}. Their parameter-independent comparison coefficients and radial argument apply to every $s>0$, although stated for reduced flux $\nu\in(0,1/2]$. We use the parameters $\nu$ and $1-\nu$.
\begin{lemma}\label{lem:full-sphere-sector}
    Let $s>0$ and assume that the Gaussian curvature is at most one.
    \begin{enumerate}[label=\textup{(\roman*)},leftmargin=2.3em]
        \item If $\Omega$ is a one-pole surface with $|\Omega|\geq4\pi$, then
        \begin{equation*}
            \kappa_1(\Omega,A_p^{(s)})
            =\kappa_1(G_p;s)
            \leq s(s+1).
        \end{equation*}

        \item If $\Sigma$ is a closed two-pole surface, then
        \begin{equation*}
            w_1(\Sigma,A_{z_1,z_2}^{(s)})
            =w_1(G_{z_1,z_2};s)
            \leq s(s+1).
        \end{equation*}
    \end{enumerate}
    The same conclusions hold when the Gaussian curvature is at most $K>0$, with each right-hand side replaced by $Ks(s+1)$ and the area condition in \textup{(i)} replaced by $|\Omega|\geq4\pi/K$.
\end{lemma}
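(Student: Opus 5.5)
We prove \cref{lem:full-sphere-sector} by exhibiting an explicit test function in the one-dimensional variational problem and comparing level coefficients with the full-sphere model. By scaling $g\mapsto Kg$ (areas multiply by $K$, eigenvalues by $K^{-1}$) it suffices to treat $K=1$. On the round sphere $\Sph$ with antipodal poles, the two-pole Green function has level coefficient $G^\star(a)=a(4\pi-a)$ on $(0,4\pi)$. The radial equation $-(G^\star f')'+4\pi^2s^2f/G^\star=wf$ admits the explicit positive solution
\begin{equation*}
 f_s(a)=\bigl(a(4\pi-a)\bigr)^{s/2},
\end{equation*}
which corresponds to $(\sin r)^s$ in polar coordinates, since $a=2\pi(1-\cos r)$. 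A direct computation shows that its eigenvalue is $s(s+1)$. We will use $f_s$ as the test function.

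\textbf{Two-pole case.} We need $G_{z_1,z_2}(a)\geq \frac{M}{4\pi}\cdot G^\star$-type comparison. The natural statement is the Chavel--Feldman isoperimetric inequality on closed surfaces with curvature at most one: for each level set, $L^2\geq A(4\pi-A)$ applies to the smaller of the superlevel set and its complement. With Cauchy--Schwarz, $L^2\leq G\int|d\psi|\,ds=G$, so $G(a)\geq \min(a,M-a)\bigl(4\pi-\min(a,M-a)\bigr)$. When $M\neq 4\pi$ this is not the model coefficient on $(0,M)$, so we reparametrize. Define the rescaled test $f(a)=f_s(4\pi a/M)$, or more carefully, set $b=b(a)$ by $G$-matching. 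The cleanest route is to evaluate the Rayleigh quotient of $f$ directly:
\begin{equation*}
\frac{\int_0^M(Gf'^2+4\pi^2s^2f^2/G)\,da}{\int_0^M f^2\,da}.
\end{equation*}
The first term increases with $G$ and the second decreases, so a pointwise lower bound on $G$ alone does not suffice. This is the main obstacle. Following Michetti--Provenzano--Savo, we handle it through a parameter-independent splitting. Integrating by parts against the model equation, the quotient is at most $s(s+1)$ provided
\begin{equation*}
\int (G-\tilde G)\bigl(f'^2-\tfrac{4\pi^2s^2f^2}{G\tilde G}\bigr)\,da\leq0,
\end{equation*}
where $\tilde G$ is the comparison coefficient. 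For $f_s$ one has $f_s'^2=s^2\,(G^{\star\prime})^2f_s^2/(4G^{\star2})$, and $(G^{\star\prime})^2=4(4\pi^2-G^\star)\cdot$const. The factor $s^2f^2$ therefore appears in both terms, and the sign condition reduces to an $s$-independent inequality between $G$ and $\tilde G$. This is exactly what "parameter-independent comparison coefficients" refers to, so the MPS verification for $\nu\in(0,1/2]$ transfers verbatim to any $s>0$. I would check this reduction first: after factoring out $s^2f_s^2$, the integrand should be $\frac{(G-\tilde G)}{G\tilde G}\bigl(\tfrac{G(\tilde G')^2}{4\tilde G}-4\pi^2\bigr)$ up to the coordinate change, and its sign is controlled by the isoperimetric bound.

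\textbf{One-pole case with $|\Omega|\geq4\pi$.} Here $a=M$ is a Neumann endpoint. We use the test function equal to $f_s$ on $(0,4\pi)$ transplanted onto $(0,2\pi)$ near the pole, extended by the constant $f_s(2\pi)$ up to $M$ (the maximum of $f_s$ occurs at $a=2\pi$). On the constant part, $f'=0$ and the potential term $4\pi^2s^2f^2/G$ is handled using $G(a)\geq$ the isoperimetric lower bound, which is at least $4\pi^2$ once $a\geq 2\pi$ in the curvature-at-most-one regime. The quotient there is then at most $s^2\leq s(s+1)$ pointwise, while the $(0,2\pi)$ part is treated as in the two-pole case. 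Combining the two parts via mediant inequalities gives $\kappa_1(G_p;s)\leq s(s+1)$. The equality identification $\kappa_1(\Omega,A_p^{(s)})=\kappa_1(G_p;s)$ is Proposition (i) already stated.
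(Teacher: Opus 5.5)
Your argument has a genuine gap exactly at the obstacle you flag, and the proposed ``splitting'' does not remove it. With $\tilde G=G^\star$ and $(G^{\star\prime})^2=4(4\pi^2-G^\star)$, the integrand you need to be nonpositive is
\[
\frac{s^2f_s^2\,(G-G^\star)}{G\,G^\star}\Bigl(\frac{G\,(4\pi^2-G^\star)}{G^\star}-4\pi^2\Bigr).
\]
Isoperimetry gives $G\geq G^\star$, which signs only the first factor. The second factor is $\leq0$ if and only if $G(4\pi^2-G^\star)\leq4\pi^2G^\star$. That is an \emph{upper} bound on $G$, and nothing in the hypotheses supplies it. At any level where $G>4\pi^2G^\star/(4\pi^2-G^\star)$, the integrand is positive. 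Factoring out $s^2f_s^2$ makes the condition $s$-independent, but it does not give it a sign. The ``parameter-independent comparison coefficients'' in the paper are the truncated coefficients $G_\varepsilon^\star$, not a pointwise sign condition.

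The one-pole step also fails. For $a\geq2\pi$ the isoperimetric bound $a(4\pi-a)$ is at most $4\pi^2$, vanishes at $a=4\pi$, and is vacuous beyond, so there is no lower bound $G\geq4\pi^2$. For a cap of constant curvature $K'<1$ and area $4\pi$, which lies in the class, one has $G(a)=a(4\pi-K'a)$ and
\[
\int_{2\pi}^{4\pi}\frac{4\pi^2\,da}{G(a)}=\pi\Bigl[\log2+\log\frac{2-K'}{2(1-K')}\Bigr]\longrightarrow\infty\qquad(K'\to1).
\]
Since your profile is bounded by the constant $f_s(2\pi)$, its Rayleigh quotient tends to $+\infty$ rather than staying below $s(s+1)$. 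Separately, your two-pole test function for $M\neq4\pi$ is never actually specified.

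What is missing is \cref{lem:monotonicity}. It compares \emph{first eigenvalues}, giving $\kappa_1(G;s)\leq\kappa_1(G_1;s)$ whenever $G\geq G_1$, rather than comparing the quotient of one fixed profile in the area variable. It sidesteps the competition between the two terms for the following reason. In the Green-level variable $t$, the energy is $\int(g'^2+4\pi^2s^2g^2)\,dt$ regardless of $G$, and only the mass $\int g^2\,G(\alpha(t))\,dt$ sees the geometry.

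The paper proceeds in three steps.
\begin{enumerate}[label=\textup{(\arabic*)},leftmargin=2.3em]
\item It applies this lemma with the truncated model coefficient $G_\varepsilon^\star$. This coefficient equals $G^\star$ on $(0,a_\varepsilon)$, equals $\varepsilon$ afterwards, and in the two-pole case equals the reflected model near $a=M$. For small $\varepsilon$ one has $G\geq G_\varepsilon^\star$.
\item It then restricts to profiles vanishing on $(a_\varepsilon,M)$, so that no test mass sits where $G$ is uncontrolled. This yields $\kappa_1(G;s)\leq\eta_{s,0}^D(R_\varepsilon)$.
\item Only at the end does your profile $\sin^s r$ enter, as a cut-off test function showing $\eta_{s,0}^D(R_\varepsilon)\to s(s+1)$.
\end{enumerate}
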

\begin{proof}
    We briefly recall the full-sphere comparison argument of \cite[Sections~4.3--4.6 and Lemma~A.3]{MichettiProvenzanoSavo2026}, using the notation appropriate to an arbitrary effective parameter $s>0$.
    
    Retain the unified notation $(X,G,\kappa_1(G;s))$ from \cref{sec:two-sector}, and set $M=|X|$. Thus $M\geq4\pi$ in both cases. Recall
    \begin{equation*}
        G^\star(a)=a(4\pi-a), \qquad 0<a<4\pi.
    \end{equation*}
    For sufficiently small $\varepsilon>0$, define
    \begin{equation*}
        a_\varepsilon=2\pi+\sqrt{4\pi^2-\varepsilon}, \qquad b_\varepsilon=2\pi-\sqrt{4\pi^2-\varepsilon},
    \end{equation*}
    so that $G^\star(a_\varepsilon)=G^\star(b_\varepsilon)=\varepsilon$. Let $R_\varepsilon\in(\pi/2,\pi)$ satisfy
    \begin{equation*}
        2\pi(1-\cos R_\varepsilon)=a_\varepsilon.
    \end{equation*}
    The geodesic cap of radius $R_\varepsilon$ in $\Sph$ therefore has area $a_\varepsilon$, and $R_\varepsilon\to\pi$ as $\varepsilon\to0$. Set
    \begin{equation*}
        \begin{aligned}
            G_\varepsilon^\star(a)&=
            \begin{cases}
                G^\star(a), & 0<a<a_\varepsilon,\\
                \varepsilon, & a_\varepsilon\leq a<M,
            \end{cases}
            &&\text{in the one-pole case},\\[0.4em]
            G_\varepsilon^\star(a)&=
            \begin{cases}
                G^\star(a), & 0<a<a_\varepsilon,\\
                \varepsilon, & a_\varepsilon\leq a\leq M-b_\varepsilon,\\
                G^\star(M-a), & M-b_\varepsilon<a<M,
            \end{cases}
            &&\text{in the two-pole case}.
        \end{aligned}
    \end{equation*}
    When $M=4\pi$, the middle interval in the second definition has zero length. The corresponding Green-level comparisons give $G\geq G_\varepsilon^\star$. Hence \cref{lem:monotonicity}, applied by approximation, yields
    \begin{equation}\label{eq:full-sphere-coefficient-comparison}
        \kappa_1(G;s)\leq\kappa_1(G_\varepsilon^\star;s).
    \end{equation}

    Let $\eta_{s,0}^D(R)$ be the first eigenvalue of $L_s$ on $(0,R)$ with the finite-energy condition at $r=0$ and the Dirichlet condition at $r=R$; see \cref{sec:cap-radial} for the definition and properties of this radial problem. Restricting the variational problem for $\kappa_1(G_\varepsilon^\star;s)$ to profiles vanishing on $(a_\varepsilon,M)$ gives
    \begin{equation*}
        \begin{aligned}
            \kappa_1(G_\varepsilon^\star;s) \leq \inf_{\substack{0\neq f\in\cF_{G_\varepsilon^\star}\\ f\equiv0\ \mathrm{on}\ (a_\varepsilon,M)}}\frac{\int_0^M\left( G_\varepsilon^\star f'^2+\frac{4\pi^2s^2}{G_\varepsilon^\star}f^2 \right)\dd a}{\int_0^M f^2\dd a}=\eta_{s,0}^D(R_\varepsilon).
        \end{aligned}
    \end{equation*}
    Here the last equality follows from $G_\varepsilon^\star=G^\star$ on $(0,a_\varepsilon)$ and the change of variables
    \begin{equation*}
        a=2\pi(1-\cos r).
    \end{equation*}
    Together with \eqref{eq:full-sphere-coefficient-comparison}, this gives
    \begin{equation*}
        \kappa_1(G;s)\leq\eta_{s,0}^D(R_\varepsilon).
    \end{equation*}

    Finally, the Dirichlet exhaustion argument in \cite[Lemma~A.3]{MichettiProvenzanoSavo2026}, applied to the radial problem with parameter $s$, gives
    \begin{equation*}
        \lim_{\varepsilon\to0}\eta_{s,0}^D(R_\varepsilon)=s(s+1).
    \end{equation*}
    The cutoff proof uses the profile $\sin^s r$ and requires only $s>0$. Letting $\varepsilon\to0$ proves both assertions. The curvature-$K$ statement follows by scaling.
\end{proof}

\begin{proof}[Proof of \cref{thm:C}]
    Apply \cref{lem:full-sphere-sector} with $s=\nu$ and $s=1-\nu$, and use \cref{prop:two-sector}. The resulting reciprocal-sum bounds are equivalent to the harmonic mean inequalities in \eqref{eq:large-intro} and \eqref{eq:closed-intro}.

    On $\SphK$ with antipodal poles, the separated spectrum computed in \cite[Proposition~A.1]{MichettiProvenzanoSavo2026} is
    \begin{equation*}
        K\bigl(|n-\nu|+k\bigr)\bigl(|n-\nu|+k+1\bigr), \qquad n\in\mathbb Z, \quad k\in\mathbb N_0.
    \end{equation*}
    For $0<\nu\leq1/2$, its first two eigenvalues, counted with multiplicity, are
    \begin{equation*}
        \lambda_1(\SphK,A_{p^*,-p^*}^{(\nu)})=K\nu(\nu+1), \qquad
        \lambda_2(\SphK,A_{p^*,-p^*}^{(\nu)})=K(1-\nu)(2-\nu).
    \end{equation*}
    This proves the model equalities in both statements.

    We now characterize equality and prove sharpness. By scaling, it suffices to take $K=1$.

    Suppose that equality holds in \eqref{eq:closed-intro}. Then both radial estimates must be equalities, and in particular
    \begin{equation*}
        w_1(G_{z_1,z_2};\nu)=\nu(\nu+1).
    \end{equation*}
    Choose a decreasing sequence $\varepsilon_j\to0$. If $|\Sigma|>4\pi$, then the strict part of \cref{lem:monotonicity} and the construction of $G_{\varepsilon_j}^\star$ give
    \begin{equation*}
        \begin{aligned}
            \nu(\nu+1) = w_1(G_{z_1,z_2};\nu) \leq w_1(G_{\varepsilon_1}^\star;\nu) < w_1(G_{\varepsilon_2}^\star;\nu) < \cdots < w_1(G_{\varepsilon_j}^\star;\nu) \leq \eta_{\nu,0}^D(R_{\varepsilon_j}) \to \nu(\nu+1),
        \end{aligned}
    \end{equation*}
    which is impossible. Therefore, $|\Sigma|=4\pi$. Since the Gaussian curvature is at most one, Gauss--Bonnet then implies that it is identically one, and hence $\Sigma$ is isometric to $\Sph$. Moreover, equality in the coefficient comparison gives
    \begin{equation*}
        G_{z_1,z_2}=G^\star,
    \end{equation*}
    which forces $z_1$ and $z_2$ to be antipodal. Conversely, the round sphere with antipodal poles attains equality.

    In the one-pole case, the same approximating coefficients satisfy
    \begin{equation*}
        \begin{aligned}
            \kappa_1(G_p;\nu) \leq \kappa_1(G_{\varepsilon_1}^\star;\nu) < \kappa_1(G_{\varepsilon_2}^\star;\nu) < \cdots < \kappa_1(G_{\varepsilon_j}^\star;\nu) \leq \eta_{\nu,0}^D(R_{\varepsilon_j}) \to \nu(\nu+1).
        \end{aligned}
    \end{equation*}
    Indeed, $\lim_{a\to M}G_p(a)>0$, since $a=M$ is a regular endpoint. Thus
    \begin{equation*}
        \kappa_1(G_p;\nu)<\nu(\nu+1),
    \end{equation*}
    and consequently
    \begin{equation*}
        \begin{aligned}
            \frac1{\lambda_1(\Omega,A_p^{(\nu)})}
            +\frac1{\lambda_2(\Omega,A_p^{(\nu)})}
            &\geq
            \frac1{\kappa_1(G_p;\nu)}
            +\frac1{\kappa_1(G_p;1-\nu)}\\
            &>
            \frac1{\nu(\nu+1)}
            +\frac1{(1-\nu)(2-\nu)}.
        \end{aligned}
    \end{equation*}
    Hence equality cannot occur in the boundary case. Scaling back gives the stated conclusions for arbitrary $K>0$.

    Finally, to prove sharpness in the boundary case, let centered geodesic caps exhaust $\SphK$ and rescale them to have area $4\pi/K$. Their Gaussian curvatures remain bounded above by $K$, while the scaling factors tend to one. By \cref{thm:cap-angular-ordering,lem:cap-exhaustion}, their first two magnetic eigenvalues converge to
    \begin{equation*}
        K\nu(\nu+1)
        \qquad\text{and}\qquad
        K(1-\nu)(2-\nu),
    \end{equation*}
    respectively. Therefore, the constant in \eqref{eq:large-intro} is sharp.
\end{proof}

\begin{corollary}
    Let $p^*,y\in\Sph$ be distinct and let $0<\nu\leq1/2$. Then
    \begin{equation*}
        \begin{aligned}
            \frac1{\lambda_1(\Sph,A_{p^*,y}^{(\nu)})}+\frac1{\lambda_2(\Sph,A_{p^*,y}^{(\nu)})} &\geq \frac1{\nu(\nu+1)}+\frac1{(1-\nu)(2-\nu)} \\
                                                                                            &= \frac1{\lambda_1(\Sph,A_{p^*,-p^*}^{(\nu)})}+\frac1{\lambda_2(\Sph,A_{p^*,-p^*}^{(\nu)})}.
        \end{aligned}
    \end{equation*}
    Equality holds if and only if $y=-p^*$.
\end{corollary}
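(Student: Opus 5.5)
This corollary is the special case $K=1$, $\Sigma=\Sph$, $z_1=p^*$, $z_2=y$ of \cref{thm:C}\textup{(ii)}. The round sphere $\Sph$ is a closed simply connected surface with Gaussian curvature identically one, so $\Sph\in\overline{\cM}_1$. For distinct poles $p^*,y$, inequality \eqref{eq:closed-intro} with $K=1$ reads
\begin{equation*}
 H_{1,2}(\Sph,A_{p^*,y}^{(\nu)})\leq\frac{\nu(1-\nu)(\nu+1)(2-\nu)}{\nu^2-\nu+1}.
\end{equation*}
We rewrite this in reciprocal form. A direct computation gives
\begin{equation*}
 \frac1{\nu(\nu+1)}+\frac1{(1-\nu)(2-\nu)}=\frac{(1-\nu)(2-\nu)+\nu(\nu+1)}{\nu(\nu+1)(1-\nu)(2-\nu)}=\frac{2(\nu^2-\nu+1)}{\nu(\nu+1)(1-\nu)(2-\nu)}.
\end{equation*}
This is exactly $2$ divided by the constant on the right of \eqref{eq:closed-intro}. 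Taking reciprocals in the harmonic-mean inequality therefore yields the first inequality of the corollary.

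The middle identity comes from the model spectrum recorded in the proof of \cref{thm:C}: on $\Sph$ with antipodal poles,
\begin{equation*}
 \lambda_1(\Sph,A_{p^*,-p^*}^{(\nu)})=\nu(\nu+1),\qquad \lambda_2(\Sph,A_{p^*,-p^*}^{(\nu)})=(1-\nu)(2-\nu).
\end{equation*}
This follows from the separated eigenvalues $(|n-\nu|+k)(|n-\nu|+k+1)$. For $0<\nu\leq1/2$, the two smallest are attained at $(n,k)=(0,0)$ and $(1,0)$, since $|n-\nu|$ takes its two smallest values $\nu\leq1-\nu$ there and $k\geq1$ only increases the value.

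For the equality case, equality in the corollary is equality in \eqref{eq:closed-intro}. By the rigidity part of \cref{thm:C}\textup{(ii)}, $\Sigma=\Sph$ must then be isometric to $\Sph$, which holds automatically, and the poles must be antipodal, so $y=-p^*$. Conversely, $y=-p^*$ gives equality by the model computation above.

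The only potential subtlety is that the rigidity statement in \cref{thm:C} asserts antipodality of the poles themselves, not merely of some isometric image of the configuration. This is covered because the equality argument there forces $G_{z_1,z_2}=G^\star$ on the given surface. That identity means each superlevel set of $\psi_{p^*,y}$ is a geodesic cap centered at $p^*$, and letting the level tend to $-\infty$ shows that the second pole is the antipode $-p^*$. I expect no genuine obstacle; the proof is an unpacking of \cref{thm:C}\textup{(ii)} plus the elementary algebraic identity above.
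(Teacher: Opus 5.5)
Your proposal is correct and follows the paper's proof, which consists of applying \cref{thm:C}\textup{(ii)} with $K=1$; your reciprocal identity and your check of the antipodal model spectrum simply make that specialization explicit. Your final paragraph is unnecessary, since antipodality means being at distance $\pi$ and is preserved by isometries of $\Sph$. If you keep it, note that $G_{z_1,z_2}=G^\star$ alone only shows each superlevel set is some geodesic disk. To get caps centered at $p^*$ you also need that $|d\psi|$ is constant on each level line, which forces the circles to be concentric.
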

\begin{proof}
    Apply \cref{thm:C}\textup{(ii)} with $K=1$.
\end{proof}

\subsection{Riemannian annuli}
\begin{proof}[Proof of \cref{thm:D}]
    Identify $C$ conformally with
    \begin{equation*}
        C_M= \mathbb{S}^{1}\times[-M,M], \qquad g_0=d\theta^2+dz^2,
    \end{equation*}
    and write its metric as $g=\rho^2g_0$, where $\rho$ is smooth and positive. By gauge invariance, we may assume
    \begin{equation*}
        A^{(\nu)}=\nu\,d\theta.
    \end{equation*}
    Consider the linearly independent trial functions
    \begin{equation}\label{eq:annulus-trials}
        \phi_0=1, \qquad \phi_1=\e^{\ii\theta}.
    \end{equation}
    Conformal invariance of the magnetic energy in dimension two and direct integration give
    \begin{equation*}
        \begin{aligned}
            \qform_{A^{(\nu)}}(\phi_0,\phi_0)=4\pi M\nu^2, \qquad \qform_{A^{(\nu)}}(\phi_1,\phi_1)=4\pi M(1-\nu)^2, \qquad \qform_{A^{(\nu)}}(\phi_0,\phi_1)=0.
        \end{aligned}
    \end{equation*}
    Moreover, since $|\phi_0|=|\phi_1|=1$,
    \begin{equation*}
        \|\phi_0\|_{L^2(C,g)}^2=\|\phi_1\|_{L^2(C,g)}^2=|C|.
    \end{equation*}
    Therefore, \cref{lem:trace-ritz} yields
    \begin{equation*}
        \begin{aligned}
            \frac1{\lambda_1(C,A^{(\nu)})}+\frac1{\lambda_2(C,A^{(\nu)})}\geq\tr(\Kmat^{-1}\Mmat)=\frac{|C|}{4\pi M}\left(\frac1{\nu^2}+\frac1{(1-\nu)^2}\right),
        \end{aligned}
    \end{equation*}
    which is equivalent to \eqref{eq:annulus-two-mode-intro}.

    On $C_M$, the magnetic Laplacian in the coordinates $(\theta,z)$ is
    \begin{equation*}
        \Delta_{A^{(\nu)},g_0}u=-u_{zz}-u_{\theta\theta}+2\ii\nu u_\theta+\nu^2u.
    \end{equation*}
    Writing $u(\theta,z)=\e^{\ii n\theta}v(z)$, with $n\in\mathbb Z$, reduces the eigenvalue problem to
    \begin{equation*}
        \begin{cases}
            \ -v''(z)+|n-\nu|^2v(z)=\lambda v(z), & -M<z<M,\\
            \ v'(-M)=v'(M)=0.
        \end{cases}
    \end{equation*}
    Here the magnetic Neumann condition reduces to the ordinary one because $A^{(\nu)}(\partial_z)=0$. The axial eigenfunctions are, up to normalization,
    \begin{equation*}
        v_k(z)=\cos\left(\frac{k\pi(z+M)}{2M}\right), \qquad k\in\mathbb N_0.
    \end{equation*}
    The products $\e^{\ii n\theta}v_k(z)$ form a complete eigenbasis, so the spectrum, counted with multiplicity, is
    \begin{equation*}
        \lambda_{n,k}(C_M,A^{(\nu)})=|n-\nu|^2+\frac{k^2\pi^2}{4M^2}, \qquad n\in\mathbb Z,\quad k\in\mathbb N_0.
    \end{equation*}
    Consequently, for $0<\nu<1/2$,
    \begin{equation*}
        \lambda_1(C_M,A^{(\nu)})=\nu^2, \qquad \lambda_2(C_M,A^{(\nu)})=\min\left\{(1-\nu)^2,\nu^2+\frac{\pi^2}{4M^2}\right\}.
    \end{equation*}
    Hence $\e^{\ii\theta}$ belongs to the second eigenspace precisely when \eqref{eq:cylinder-threshold-intro} holds. At $\nu=1/2$, the functions $1$ and $\e^{\ii\theta}$ span the eigenspace associated with the first eigenvalue $1/4$, for every $M>0$. Scaling $C_M$ to area $|C|$ multiplies its reciprocal eigenvalues by $|C|/(4\pi M)$, so \eqref{eq:annulus-two-mode-intro} yields \eqref{eq:annulus-model-intro}.

    We now characterize equality under \eqref{eq:cylinder-threshold-intro}. If equality holds in \eqref{eq:annulus-model-intro}, the equality statement in \cref{lem:trace-ritz} implies that
    \begin{equation*}
        V=\Span_{\mathbb C}\{1,\e^{\ii\theta}\}
    \end{equation*}
    is invariant under $\Delta_{A^{(\nu)},g}$. Since
    \begin{equation*}
        \Delta_{A^{(\nu)},g}1
        =\rho^{-2}\Delta_{A^{(\nu)},g_0}1
        =\nu^2\rho^{-2},
    \end{equation*}
    invariance and $\nu>0$ imply $\rho^{-2}\in V$. The only real-valued functions in $V$ are constants, so $\rho$ is constant and $C$ is homothetic to $C_M$.

    Conversely, if $C$ is homothetic to $C_M$ and \eqref{eq:cylinder-threshold-intro} holds, the trial functions in \eqref{eq:annulus-trials} realize the first two eigenvalues, counted with multiplicities, and equality follows.
\end{proof}
If \eqref{eq:cylinder-threshold-intro} fails, the second eigenvalue in the $n=0$ angular mode is smaller than the first eigenvalue in the $n=1$ angular mode. The estimate \eqref{eq:annulus-two-mode-intro} remains valid, but its upper bound is then strictly larger than $H_{1,2}(\widehat C_M,A^{(\nu)})$.

\appendix
\section{Spherical cap ordering and sharpness}\label{sec:cap-appendix}
We prove the cap-ordering theorem used in \cref{sec:caps} and establish the sharpness of \cref{thm:C}\textup{(i)}. Throughout this section, the sphere has curvature one; the corresponding statements for $\SphK$ follow by scaling.

\subsection{Radial problems on a spherical cap}\label{sec:cap-radial}
Let $\Omega_R^\star\subset\Sph$ be the geodesic disk of radius $R\in(0,\pi)$ centered at $p^*$. In polar coordinates $(r,\theta)$ around $p^*$,
\begin{equation*}
    g=dr^2+\sin^2r\,d\theta^2, \qquad A_{p^*}^{(\nu)}=\nu\,d\theta.
\end{equation*}
For $u(r,\theta)=v(r)\e^{\ii n\theta}$, $n\in\mathbb Z$, the effective angular parameter is $s=|n-\nu|$, and
\begin{equation*}
    \int_{\Omega_R^\star}|d^{A_{p^*}^{(\nu)}}u|^2\dv=2\pi\int_0^R\left(|v'|^2\sin r+\frac{s^2}{\sin r}|v|^2\right)\dd r.
\end{equation*}
For $s\geq0$, consider
\begin{equation*}
    L_sv=-\frac1{\sin r}(\sin r v')'+\frac{s^2}{\sin^2r}v
\end{equation*}
in $L^2((0,R),\sin r\dd r)$. Its Neumann form domain is
\begin{equation*}
    \mathcal Q_0=\left\{v\in H^1_{\mathrm{loc}}(0,R):\int_0^R(|v'|^2+|v|^2)\sin r\dd r<\infty\right\}
\end{equation*}
when $s=0$, whereas for $s>0$ it is
\begin{equation*}
    \mathcal Q_+=\left\{v\in\mathcal Q_0:\int_0^R\frac{|v|^2}{\sin r}\dd r<\infty\right\}.
\end{equation*}
The corresponding quadratic form is
\begin{equation*}
    q_s[v]=\int_0^R\left(|v'|^2\sin r+s^2\frac{|v|^2}{\sin r}\right)\dd r.
\end{equation*}
For $s\geq0$, let $\eta_{s,j}^N(R)$, $j\geq0$, denote the eigenvalues, in strictly increasing order, of the Sturm--Liouville problem
\begin{equation*}
    \begin{cases}
        \ -(\sin r\,v')'+\dfrac{s^2}{\sin r}v = \eta\sin r\,v, & 0<r<R,\\[0.4em]
        \ \displaystyle\lim_{r\to 0}\sin r\,v'(r)=0, \qquad v'(R)=0.
    \end{cases}
\end{equation*}
The condition at $r=0$ selects the finite-energy solution in $\mathcal Q_0$ when $s=0$ and in $\mathcal Q_+$ when $s>0$. In particular, $\eta_{0,0}^N(R)=0$, and for $s>0$,
\begin{equation*}
    \kappa_1(\Omega_R^\star,A_{p^*}^{(s)})=\eta_{s,0}^N(R).
\end{equation*}

Similarly, for $s>0$, let $\eta_{s,j}^D(R)$, $j\geq0$, denote the eigenvalues, in strictly increasing order, of
\begin{equation*}
    \begin{cases}
        \ -(\sin r\,v')'+\dfrac{s^2}{\sin r}v = \eta\sin r\,v, & 0<r<R,\\[0.4em]
        \ \displaystyle\lim_{r\to0}\sin r\,v'(r)=0, \qquad v(R)=0.
    \end{cases}
\end{equation*}
The Dirichlet quadratic form is $q_s$ restricted to
\begin{equation*}
    \left\{v\in\mathcal Q_+:v(R)=0\right\}.
\end{equation*}
Both radial problems have simple eigenvalues.

Separation of variables gives
\begin{equation}\label{eq:cap-separated-spectrum}
    \operatorname{spec}(\Omega_R^\star,A_{p^*}^{(\nu)})=\biguplus_{n\in\mathbb Z}\biguplus_{j\geq0}\left\{\eta_{|n-\nu|,j}^N(R)\right\},
\end{equation}
where the union counts multiplicities.
\begin{lemma}\label{lem:cap-angular-monotonicity}
    If $0<s<t$, then $\eta_{s,0}^N(R)<\eta_{t,0}^N(R)$.
\end{lemma}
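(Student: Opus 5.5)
Since $0<s<t$ are both positive, the two quantities $\eta_{s,0}^N(R)$ and $\eta_{t,0}^N(R)$ are minima of Rayleigh quotients over the same form domain $\mathcal Q_+$. I will therefore compare the forms directly. For every $v\in\mathcal Q_+$ we have
\begin{equation*}
    q_t[v]-q_s[v]=(t^2-s^2)\int_0^R\frac{|v|^2}{\sin r}\dd r\geq0.
\end{equation*}
The denominator is $\int_0^R|v|^2\sin r\dd r$ for both problems. By the min--max characterization of the lowest radial eigenvalue, this already gives $\eta_{s,0}^N(R)\leq\eta_{t,0}^N(R)$.

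To get strict inequality, I will evaluate at a minimizer. Let $v_t\in\mathcal Q_+$ be a ground state for $L_t$ with Neumann condition at $R$, normalized so that $\int_0^R|v_t|^2\sin r\dd r=1$. Then
\begin{equation*}
    \eta_{s,0}^N(R)\leq q_s[v_t]=q_t[v_t]-(t^2-s^2)\int_0^R\frac{|v_t|^2}{\sin r}\dd r=\eta_{t,0}^N(R)-(t^2-s^2)\int_0^R\frac{|v_t|^2}{\sin r}\dd r.
\end{equation*}
The last integral is strictly positive because $v_t\not\equiv0$. Since $t^2>s^2$, the inequality is strict.

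The only point requiring care is the justification that the infimum for $L_t$ is attained in $\mathcal Q_+$. This is the existence of the first Sturm--Liouville eigenfunction, which the paper already takes as given in setting up $\eta_{t,0}^N(R)$; it follows from compactness of the resolvent on the cap. Alternatively, one can avoid minimizers altogether. Since $\sin r\leq 1$, we have $\int|v|^2/\sin r\geq\int|v|^2\sin r$, so the comparison above gives $q_t[v]\geq q_s[v]+(t^2-s^2)\int|v|^2\sin r\dd r$ for every $v\in\mathcal Q_+$. Taking infima yields the quantitative bound $\eta_{t,0}^N(R)\geq\eta_{s,0}^N(R)+t^2-s^2$, which again shows strictness without appealing to attainment. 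I expect no real obstacle here.
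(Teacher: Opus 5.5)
Your argument is correct and is essentially the paper's proof: evaluate $q_s$ at the normalized positive ground state $v_t$ of $L_t$ and subtract the strictly positive term $(t^2-s^2)\int_0^R v_t^2/\sin r\,dr$. Your alternative via $1/\sin r\geq\sin r$ is also valid and goes slightly further: it gives the quantitative gap $\eta_{t,0}^N(R)-\eta_{s,0}^N(R)\geq t^2-s^2$ without needing the infimum to be attained.
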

\begin{proof}
    Let $v_t$ be a positive normalized first Neumann eigenfunction of $L_t$. Since $v_t\in\mathcal Q_+$,
    \begin{equation*}
        \eta_{s,0}^N(R)\leq q_s[v_t] = q_t[v_t]-(t^2-s^2)\int_0^R\frac{v_t^2}{\sin r}\dd r < \eta_{t,0}^N(R).
    \end{equation*}
\end{proof}

\subsection{The first two cap eigenvalues}
To compare the higher radial eigenvalues, we first relate the radial Neumann and Dirichlet spectra.
\begin{lemma}\label{lem:radial-ND-shift}
    For every $j\geq0$,
    \begin{equation}\label{eq:radial-ND-shift}
        \eta_{0,j+1}^N(R)=\eta_{1,j}^D(R).
    \end{equation}
    Moreover,
    \begin{equation}\label{eq:strict-DN}
        \eta_{1,0}^N(R)<\eta_{1,0}^D(R).
    \end{equation}
\end{lemma}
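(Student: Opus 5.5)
The plan is to prove \eqref{eq:radial-ND-shift} by the factorization $L_0=T^*T$, $L_1=TT^*$ with $T=d/dr$ and $T^*=-d/dr-\cot r$ in $L^2((0,R),\sin r\,dr)$. I would first confirm the two identities formally. For the first, $T^*Tv=-v''-\cot r\,v'=L_0v$. For the second, write $w=v'$; then
\[
 TT^*w=-w''-(\cot r\,w)'=-w''-\cot r\,w'+\frac{w}{\sin^2 r}=L_1w .
\]
The main step is to build intertwining maps. Let $v$ be a Neumann eigenfunction of $L_0$ with eigenvalue $\eta>0$, and set $w=Tv=v'$. Then $L_1w=TT^*Tv=\eta\,w$. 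At $r=R$ the Neumann condition gives $w(R)=v'(R)=0$. Near $r=0$, $v$ is smooth in the regular (finite-energy) branch, so $v'(r)=O(r)$, which puts $w$ in $\mathcal Q_+$ with the correct behaviour at $0$. Also $w\neq0$: otherwise $v$ is constant, which forces $\eta=0$. So $T$ sends the positive Neumann spectrum of $L_0$ into the Dirichlet spectrum of $L_1$.

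Conversely, let $w$ be a Dirichlet eigenfunction of $L_1$ with eigenvalue $\eta$, and set $v=T^*w=-(\sin r\,w)'/\sin r$. Then $L_0v=T^*TT^*w=\eta v$. The Neumann condition at $R$ follows from $Tv=v'=-T T^* w\cdot\frac{1}{1}$; more concretely, $v'=TT^*w=L_1w=\eta w$, which vanishes at $R$. Near $0$, $w\sim r$, so $v$ is bounded and regular, and $\sin r\,v'=\eta\sin r\,w\to0$. We have $v\neq0$, since otherwise $\sin r\,w$ is constant and hence zero. Moreover $\eta>0$, because $\eta\|w\|^2=q_1[w]>0$.

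Combining the two directions, $\{\eta^N_{0,j}\}_{j\ge1}=\{\eta^D_{1,j}\}_{j\ge0}$ as sets. Both problems have simple eigenvalues listed in increasing order, and $\eta^N_{0,0}=0$, so the indices match as stated. I expect the main obstacle to be the endpoint analysis at $r=0$: checking that $T$ and $T^*$ preserve the finite-energy conditions. I would settle this with Frobenius exponents: $0$ for $L_0$ (the other branch is logarithmic and is excluded), and $\pm1$ for $L_1$.

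For the strict inequality \eqref{eq:strict-DN}, the Dirichlet form domain is contained in $\mathcal Q_+$, so the variational principle gives $\eta^N_{1,0}\le\eta^D_{1,0}$. Suppose equality held. Then the Dirichlet eigenfunction $w$ would be a minimizer of the Neumann quotient, hence a Neumann eigenfunction, so $w(R)=w'(R)=0$. ODE uniqueness at the regular point $R$ would then force $w\equiv0$, a contradiction.
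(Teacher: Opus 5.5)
Your proposal is correct and follows the paper's own route. The factorization $L_0=T^*T$, $L_1=TT^*$ intertwines the positive Neumann eigenfunctions of $L_0$ with the Dirichlet eigenfunctions of $L_1$; you omit the paper's $1/\eta$ normalization of $T^*$, which is harmless since only the eigenvalue sets, all simple, are being matched. The strict inequality then comes from the same form-domain inclusion plus ODE uniqueness at $r=R$. The only slip is the garbled expression $Tv=v'=-TT^*w\cdot\frac11$, which has the wrong sign and should be replaced by the identity $v'=TT^*w=L_1w=\eta w$ that you state immediately afterwards.
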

\begin{proof}
    In $L^2((0,R),\sin r\dd r)$, consider
    \begin{equation*}
        T=\frac{d}{dr}, \qquad T^*=-\frac{d}{dr}-\cot r,
    \end{equation*}
    where $T^*$ is the formal adjoint of $T$. Direct calculation gives
    \begin{equation*}
        L_0=T^*T, \qquad L_1=TT^*.
    \end{equation*}
    Let $f$ be a Neumann eigenfunction of $L_0$ with eigenvalue $\eta>0$. The factorization shows that $g=Tf=f'$ satisfies
    \begin{equation*}
        L_1g=T(L_0f)=\eta g, \qquad g(R)=0.
    \end{equation*}
    Conversely, if $g$ is a Dirichlet eigenfunction of $L_1$ with eigenvalue $\eta$, then
    \begin{equation*}
        f=\frac1\eta T^*g=-\frac1\eta(g'+\cot r\,g)
    \end{equation*}
    satisfies $Tf=g$, $L_0f=\eta f$, and $f'(R)=0$. The regular asymptotics at $r=0$ show that both maps preserve the finite-energy condition. They are inverse on the corresponding eigenspaces, so the positive Neumann spectrum of $L_0$ coincides with the Dirichlet spectrum of $L_1$. Since $\eta_{0,0}^N(R)=0$, ordering the eigenvalues proves \eqref{eq:radial-ND-shift}.

    The inclusion of the Dirichlet form domain in the Neumann form domain gives
    \begin{equation*}
        \eta_{1,0}^N(R)\leq\eta_{1,0}^D(R).
    \end{equation*}
    If equality held, a first Dirichlet eigenfunction would also minimize the Neumann Rayleigh quotient and hence satisfy $g'(R)=0$. Since $g(R)=0$, uniqueness for the radial equation at the regular endpoint $R$ would then imply $g\equiv0$, a contradiction. This proves \eqref{eq:strict-DN}.
\end{proof}

\begin{proof}[Proof of \cref{thm:cap-angular-ordering}]
    Suppose first that $0<\nu<1/2$. The two smallest effective angular parameters are $\nu$ and $1-\nu$, corresponding to $n=0$ and $n=1$, respectively. By \cref{lem:cap-angular-monotonicity},
    \begin{equation*}
        \eta_{\nu,0}^N(R)<\eta_{1-\nu,0}^N(R),
    \end{equation*}
    and the first radial eigenvalue in every other angular mode is strictly larger than $\eta_{1-\nu,0}^N(R)$.

    It remains to compare this value with the second radial eigenvalue in the $n=0$ angular mode. Since $\mathcal Q_+\subset\mathcal Q_0$ and $q_\nu[v]\geq q_0[v]$ on $\mathcal Q_+$, the min--max principle gives
    \begin{equation*}
        \begin{aligned}
            \eta_{\nu,1}^N(R) &= \inf_{\substack{V\subset\mathcal Q_+\\ \dim V=2}}\sup_{0\neq v\in V}\frac{q_\nu[v]}{\int_0^R|v|^2\sin r\dd r}\\
                              &\geq \inf_{\substack{V\subset\mathcal Q_+\\ \dim V=2}}\sup_{0\neq v\in V}\frac{q_0[v]}{\int_0^R|v|^2\sin r\dd r}\\
                              &\geq \inf_{\substack{V\subset\mathcal Q_0\\ \dim V=2}}\sup_{0\neq v\in V}\frac{q_0[v]}{\int_0^R|v|^2\sin r\dd r} = \eta_{0,1}^N(R).
        \end{aligned}
    \end{equation*}
    Together with \cref{lem:cap-angular-monotonicity} and \cref{lem:radial-ND-shift}, this yields
    \begin{equation*}
        \eta_{\nu,1}^N(R)\geq\eta_{0,1}^N(R)=\eta_{1,0}^D(R)>\eta_{1,0}^N(R)>\eta_{1-\nu,0}^N(R).
    \end{equation*}
    Thus the first two entries of the separated spectrum \eqref{eq:cap-separated-spectrum} are $\eta_{\nu,0}^N(R)$ and $\eta_{1-\nu,0}^N(R)$. Each occurs in exactly one angular mode and is simple in the corresponding radial problem, proving \textup{(i)}.

    If $\nu=1/2$, the angular modes $n=0$ and $n=1$ have the same effective parameter $1/2$. The simple first radial eigenvalue therefore gives two linearly independent magnetic eigenfunctions. Every other angular mode has a strictly larger first radial eigenvalue by \cref{lem:cap-angular-monotonicity}, and all higher radial eigenvalues are larger still. This proves \textup{(ii)}.
\end{proof}

\begin{remark}
    For fixed $\nu$ and $n\in\mathbb Z$, a radial eigenfunction $v$ associated with $\eta_{|n-\nu|,j}^N(R)$ gives the magnetic Neumann eigenfunction $v(r)\e^{\ii n\theta}$ with the same eigenvalue. Arranging the values in \eqref{eq:cap-separated-spectrum} in nondecreasing order, with multiplicities, gives $\lambda_k(\Omega_R^\star,A_{p^*}^{(\nu)})$, $k\geq1$. Thus $j$ indexes the eigenvalues within a fixed angular mode, whereas $k$ indexes the full magnetic spectrum.
\end{remark}

\subsection{The full-sphere limit and sharpness}
For the sharpness assertion in \cref{rem:punctured-model}, we determine the limit of the first radial Neumann eigenvalue for each positive parameter.
\begin{lemma}\label{lem:cap-exhaustion}
    For every $s>0$,
    \begin{equation*}
        \lim_{R\to\pi}\eta_{s,0}^N(R)=s(s+1).
    \end{equation*}
\end{lemma}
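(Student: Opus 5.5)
We prove the limit by a two-sided bound: an upper bound from an explicit test function, and a lower bound from comparison with the Dirichlet problem plus the known full-sphere limit.

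\emph{Upper bound.} We use the profile $v_s(r)=\sin^s r$ in $q_s$ on $(0,R)$. On the whole interval $(0,\pi)$, $\sin^s r$ is the eigenfunction of $L_s$ with eigenvalue $s(s+1)$. Indeed, $(\sin r\,(\sin^s r)')'=s(\sin^s r\cos r)'$, and a direct computation gives $L_s\sin^s r=s(s+1)\sin^s r$. Moreover, $v_s\in\mathcal Q_+$ for $s>0$, since $\sin^{2s-1}r$ is integrable at $0$.

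Integrating by parts on $(0,R)$ gives
\begin{equation*}
 q_s[v_s]=s(s+1)\int_0^R v_s^2\sin r\dd r+\bigl[\sin r\,v_s v_s'\bigr]_0^R .
\end{equation*}
The boundary term at $0$ vanishes. The term at $R$ equals $s\sin^{2s}R\cos R$, which tends to $0$ as $R\to\pi$. The mass $\int_0^R\sin^{2s+1}r\dd r$ stays bounded below by a positive constant. Hence
\begin{equation*}
 \limsup_{R\to\pi}\eta_{s,0}^N(R)\le s(s+1).
\end{equation*}
For $R>\pi/2$ the boundary term is in fact negative, so $\eta_{s,0}^N(R)<s(s+1)$ already holds there.

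\emph{Lower bound.} This is the main step, since Neumann eigenvalues need not be monotone in $R$. The plan is to show that every first Neumann eigenfunction on the cap is close to a full-sphere competitor. Take the positive normalized eigenfunction $v_R$ of $\eta_{s,0}^N(R)$. Because its eigenvalue is bounded by the upper bound, it satisfies a uniform energy bound
\begin{equation*}
 \int_0^R\Bigl(v_R'^2\sin r+s^2\frac{v_R^2}{\sin r}\Bigr)\dd r\le C .
\end{equation*}
Extend $v_R$ to $(0,\pi)$ by the constant $v_R(R)$ on $(R,\pi)$. The issue is that the term $s^2\int_R^\pi v_R(R)^2/\sin r\,dr$ must be small. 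That integral is of order $v_R(R)^2\log\bigl(1/(\pi-R)\bigr)$, so the real task is to show that $v_R(R)^2\log\bigl(1/(\pi-R)\bigr)\to0$.

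To control $v_R(R)$, we use the equation near the endpoint. Since $(\sin r\,v_R')'=(s^2/\sin r-\eta\sin r)v_R>0$ near $R$ and $v_R'(R)=0$, we get $v_R'<0$ on $(\pi/2+\delta,R)$. Thus $v_R$ is decreasing there, so $v_R(r)\ge v_R(R)$, and the energy bound gives
\begin{equation*}
 v_R(R)^2\int_{\pi/2}^R\frac{dr}{\sin r}\le C .
\end{equation*}
This yields $v_R(R)^2\log\bigl(1/(\pi-R)\bigr)\le C$, which is only boundedness, not the decay we need.

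A cleaner route is a two-scale logarithmic cutoff. Multiply $v_R$ by a cutoff $\chi$ that equals $1$ on $(0,\pi-\sqrt{\pi-R})$ and interpolates logarithmically in $\log\sin r$ down to $0$ at $R$. The new Dirichlet test function $\chi v_R$ has Dirichlet energy exceeding $q_s[v_R]$ by at most
\begin{equation*}
 2\int v_R^2\chi'^2\sin r+\text{cross terms}\lesssim \frac{v_R(R)^2}{\log\bigl(1/(\pi-R)\bigr)}+o(1)\to0,
\end{equation*}
using the boundedness of $v_R(R)^2\log(1/(\pi-R))$. Here the $s^2/\sin r$ part only decreases under the cutoff. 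The mass loss is $o(1)$, because $v_R$ is decreasing and bounded near $\pi$.

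It follows that $\eta_{s,0}^D(R)\le\eta_{s,0}^N(R)+o(1)$. By the Dirichlet exhaustion limit $\eta_{s,0}^D(R)\to s(s+1)$ quoted in the proof of \cref{lem:full-sphere-sector}, we conclude $\liminf_{R\to\pi}\eta_{s,0}^N(R)\ge s(s+1)$. Combined with the upper bound, this proves the lemma.
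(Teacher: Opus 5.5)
Your proof is correct, but your lower bound follows a different route from the paper's.

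The paper argues by compactness. Along $R_j\to\pi$ realizing the lower limit, the normalized eigenfunctions converge weakly in $H^1$ on compact subintervals of $(0,\pi)$. The Hardy-type weight $s^2/\sin r$ rules out loss of mass at both poles, via $\int v_j^2\sin r\,dr\le\sin^2\delta\int v_j^2/\sin r\,dr$. Lower semicontinuity then produces a full-sphere competitor whose energy is at most the lower limit.

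You instead cut $v_R$ off logarithmically near $R$ and compare with the Dirichlet problem. For this you only need the elementary bound $\eta^D_{s,0}(R)\ge s(s+1)$, obtained by extending a Dirichlet function by zero to a full-sphere competitor. The quoted Dirichlet exhaustion limit is not needed. Your route is quantitative: it gives an explicit logarithmic rate for $s(s+1)-\eta^N_{s,0}(R)$. The cost is that it needs pointwise control of $v_R$ near $R$ from the sign structure of the ODE. The paper's softer argument never needs this; it uses only the uniform energy bound.

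One estimate needs correcting. On the cutoff interval $(R_1,R)$, with $R_1=\pi-\sqrt{\pi-R}$, the function $v_R$ is decreasing. So $\int v_R^2\chi'^2\sin r\,dr$ is controlled by $v_R(R_1)^2\int_{R_1}^R\chi'^2\sin r\,dr$, not by a multiple of $v_R(R)^2$, which is the minimum of $v_R$ there.

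The fix is immediate. Apply your monotonicity argument at $R_1$ instead of $R$ to get $v_R(R_1)^2\log\bigl(1/(\pi-R_1)\bigr)\le C$. Combined with $\int_{R_1}^R\chi'^2\sin r\,dr\lesssim1/\log\bigl(1/(\pi-R)\bigr)$, this shows the $\chi'^2$ term tends to zero. By Cauchy--Schwarz against the bounded energy, so does the cross term.

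You should also make the monotonicity region explicit. The inequality $v_R'<0$ holds on a fixed interval $(r_0,R)$, where $\sin^2r_0=s/(s+1)$ and $r_0>\pi/2$. This uses $\eta^N_{s,0}(R)\le s(s+1)$, which your upper-bound computation provides for $R>\pi/2$.
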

\begin{proof}
    Fix $s>0$. The function $f(r)=\sin^s r$ has finite energy on $(0,\pi)$ and satisfies
    \begin{equation*}
        L_sf=s(s+1)f.
    \end{equation*}
    Since $f$ is positive, it is a first eigenfunction of the full-sphere radial problem with finite-energy conditions at both endpoints.
    The restriction of $f$ to $(0,R)$ belongs to the Neumann form domain $\mathcal Q_+$. Hence
    \begin{equation*}
        \eta_{s,0}^N(R)\leq\frac{\int_0^R\left(|f'|^2\sin r+\frac{s^2}{\sin r}|f|^2\right)\dd r}{\int_0^R|f|^2\sin r\dd r}.
    \end{equation*}
    Both integrals converge to their full-sphere counterparts as $R\to\pi$, so
    \begin{equation}\label{eq:cap-exhaustion-upper}
        \limsup_{R\to\pi}\eta_{s,0}^N(R)\leq s(s+1).
    \end{equation}

    For the reverse inequality, choose $R_j\to\pi$ realizing the lower limit of the first radial eigenvalues, and let $v_j$ be positive normalized first eigenfunctions. By \eqref{eq:cap-exhaustion-upper}, there is a constant $C$ independent of $j$ such that
    \begin{equation*}
        \int_0^{R_j}v_j^2\sin r\dd r=1, \qquad \int_0^{R_j}\left(|v_j'|^2\sin r+\frac{s^2}{\sin r}v_j^2\right)\dd r = \eta_{s,0}^N(R_j)\leq C.
    \end{equation*}
    These estimates give uniform $H^1$ bounds on compact subintervals of $(0,\pi)$. A diagonal subsequence yields a function $v$ such that $v_j$ converges weakly in $H^1$ and strongly in $L^2$ on each such subinterval.

    We next show that $v$ has unit norm. For $0<\delta<\pi/2$ and all sufficiently large $j$, one has $R_j>\pi-\delta$ and
    \begin{equation*}
        \begin{aligned}
            \int_0^\delta v_j^2\sin r\dd r+\int_{\pi-\delta}^{R_j}v_j^2\sin r\dd r\leq\sin^2\delta\int_0^{R_j}\frac{v_j^2}{\sin r}\dd r\leq\frac{C\sin^2\delta}{s^2}.
        \end{aligned}
    \end{equation*}
    Thus the squared norm near the poles tends uniformly to zero as $\delta\to0$. Strong convergence on $(\delta,\pi-\delta)$ gives
    \begin{equation*}
        1-\frac{C\sin^2\delta}{s^2} \leq \int_\delta^{\pi-\delta}|v|^2\sin r\dd r \leq 1.
    \end{equation*}
    Letting $\delta\to0$ gives
    \begin{equation*}
        \int_0^\pi|v|^2\sin r\dd r=1.
    \end{equation*}

    Finally, lower semicontinuity on compact subintervals, followed by $\delta\to0$, gives
    \begin{equation*}
        \int_0^\pi\left(|v'|^2\sin r+\frac{s^2}{\sin r}|v|^2\right)\dd r \leq \liminf_{j\to\infty}\eta_{s,0}^N(R_j).
    \end{equation*}
    Hence $v$ is an admissible normalized competitor for the full-sphere radial problem, whose variational characterization gives
    \begin{equation*}
        s(s+1) \leq \int_0^\pi\left(|v'|^2\sin r+\frac{s^2}{\sin r}|v|^2\right)\dd r \leq \liminf_{R\to\pi}\eta_{s,0}^N(R).
    \end{equation*}
    Together with \eqref{eq:cap-exhaustion-upper}, this proves the assertion.
\end{proof}

To prove sharpness in \cref{rem:punctured-model}, rescale the metric of each centered cap $\Omega_R^\star$ by $4\pi/|\Omega_R^\star|$. The rescaled caps have area $4\pi$ and Gaussian curvature $|\Omega_R^\star|/(4\pi)<1$.
The scaling factors tend to one as $R\to\pi$, so \cref{thm:cap-angular-ordering,lem:cap-exhaustion} give
\begin{equation*}
    \begin{aligned}
        \lim_{R\to\pi}\frac{|\Omega_R^\star|}{4\pi}\lambda_1(\Omega_R^\star,A_{p^*}^{(\nu)})=\nu(\nu+1),\qquad\qquad
        \lim_{R\to\pi}\frac{|\Omega_R^\star|}{4\pi}\lambda_2(\Omega_R^\star,A_{p^*}^{(\nu)})=(1-\nu)(2-\nu).
    \end{aligned}
\end{equation*}
Their harmonic means therefore converge to the right-hand side of \eqref{eq:large-intro} with $K=1$, proving sharpness. The conclusion for every $K>0$ follows by scaling.

\section*{Acknowledgments}
The authors acknowledge the use of AI for assistance with English-language editing and polishing. 
The authors take full responsibility for the mathematical content and the final manuscript.

\bibliographystyle{amsalpha}
\bibliography{CY-MagLap-2609.bib}

\vspace{1cm}

\end{document}